\let\amsamp=&
\theoremstyle{plain}
\newtheorem{thm}{Theorem}[subsection]
\newtheorem*{thm*}{Theorem}
\newtheorem{prop}[thm]{Proposition} 
\newtheorem*{prop*}{Proposition} 
\newtheorem{lemma}[thm]{Lemma}
\theoremstyle{definition} 
\newtheorem{defn}[thm]{Definition}
\newtheorem*{note*}{Note}
\newtheorem{rem}[thm]{Remark}
\newtheorem{notation}[thm]{Notation}
\newcommand{\id}{\mathrm{id}} 
\newcommand{\kk}{R} 
\newcommand{\N}{\mathbb{N}} 
\newcommand{\Z}{\mathbb{Z}} 
\DeclareMathOperator{\Coker}{Coker} 
\newcommand{\Aa}{\mathcal{A}}
\newcommand{\Ee}{\mathcal{E}}
\newcommand{\Zzw}{\mathcal{ZW}}
\newcommand{\Bbw}{\mathcal{BW}}
\newcommand{\ncpx}{N\text{-}\mathrm{mC}_\kk} 
\newcommand{\cpx}[1]{#1\text{-}\mathrm{mC}_\kk} 
\newcommand{\Mod}{\text{-}\mathrm{Mod}} 
\newcommand{\bimod}{\mathsf{bgMod}_\kk} 		
\newcommand{\vmat}[2]{\mbox{\tiny$\begin{pmatrix} #1\\ #2\end{pmatrix}$}}
\newcommand{\inv}{\mathfrak o} 
\newcommand{\EI}{\mathit{'\hspace{-0.1em}E}}
\newcommand{\EII}{\mathit{''\hspace{-0.1em}E}}
\newcommand{\II}{\mathit{'\hspace{-0.1em}I}}
\newcommand{\JI}{\mathit{'\hspace{-0.1em}J}}
\newcommand{\III}{\mathit{''\hspace{-0.1em}I}}
\newcommand{\JII}{\mathit{''\hspace{-0.1em}J}}
\title[Model category structures on truncated multicomplexes]{Model category structures on truncated multicomplexes for complex geometry}
\author{Joana Cirici}
\address[Joana Cirici]{Departament de Matem\`atiques i Inform\`atica, Universitat de Barcelona, Gran Via 585, 08007
Barcelona, Spain}
\email{jcirici@ub.edu}
\author{Muriel Livernet}
\address[Muriel Livernet]{Universit\'e Paris Cit\'e,  Institut de Mathématiques de Jussieu-Paris Rive Gauche, F-75013 Paris, SU, CNRS, DMA, ENS-PSL}
\email{muriel.livernet@imj-prg.fr}
\author{Sarah Whitehouse}
\address[Sarah Whitehouse]{
School of Mathematical and Physical Sciences\\
University of Sheffield\\ Sheffield\\ S3 7RH\\ UK}
\email{s.whitehouse@sheffield.ac.uk }
\subjclass[2020]{
18N40, 
18G40, 
32Q55
}
\keywords{multicomplex, spectral sequence, model category, complex manifold, pluripotential homotopy theory}
\thanks{J.~C.~acknowledges Govern de Catalunya (2021-SGR-00697 and Serra H\'{u}nter Program), the Spanish State Research Agency (CEX2020-001084-M, EUR2023-143450, PID2020-117971GB-C22 and PID2024-155646NB-I00) and the  Agence Nationale pour la Recherche through project ANR-20-CE40-0016 HighAGT.\\  S.~W.~ and M.~L.~ would like to thank the Isaac Newton Institute for Mathematical Sciences, Cambridge, for support and hospitality during the programme Equivariant homotopy theory in context, where some of the work on this paper was undertaken. This work was supported by EPSRC grant EP/Z000580/1.}
\begin{document}

\begin{abstract} To a bicomplex one can associate two natural filtrations, the column and row filtrations, 
and then two associated spectral sequences. This can be generalized to $N$-multicomplexes. We present a family of model 
category structures on the category of $N$-multicomplexes where the weak equivalences are the morphisms 
inducing a quasi-isomorphism at a fixed page $r$ of the first spectral sequence and at a fixed page $s$ of the second spectral sequence.
Such weak equivalences arise naturally in complex geometry. In particular, the model structures presented here 
establish a basis for studying homotopy types of almost  complex manifolds.

\end{abstract}
\maketitle

\section{Introduction}
The equations satisfied by the differentials of a bicomplex or, more generally, the structure maps of an $N$-multicomplex, 
are symmetric. When doing homological algebra of bicomplexes, one usually breaks the symmetry by considering  
the spectral sequence associated to either the column or the row filtration. Note that there is an involution  on the category 
of bicomplexes which exchanges the roles of the column and row spectral sequences. Similarly, the
category of $N$-multicomplexes (also known as $N$-truncated multicomplexes or twisted complexes) also enjoys an 
involution giving two naturally associated spectral sequences. For each integer $r\geq 0$ there is a class of weak equivalences  
given by those maps inducing a quasi-isomorphism at the $r$-page of the chosen spectral sequence. 
Model structures with respect to such classes of weak equivalences have been previously constructed in \cite{celw} and \cite{MuRo} 
for bicomplexes and in \cite{fglw22} for the case of multicomplexes. 

Readily, the question arises whether one could consider more symmetric homological algebra approaches which treat 
both differentials of a bicomplex on equal footing and, more generally, which are invariant under the involution operation 
on $N$-multicomplexes. Surprisingly (and to our knowledge), such theories have only been recently considered, in the setting of 
complex geometry as we will next explain, and are still at an early stage of development. 

The complexified de Rham algebra of any complex manifold admits a bigrading induced by the almost complex structure and, in the integrable case, the 
exterior differential decomposes into two complex conjugate components, $d=\overline\partial+\partial$, giving a bicomplex.
Cohomology with respect to $\overline{\partial}$ is known as \textit{Dolbeault cohomology} and  cohomology with respect to 
$\partial$ is often referred to as \textit{anti-Dolbeault cohomology}.
Recent work of Angella-Tomassini~\cite{AnTo} shows that the consideration of symmetric Massey-like triple products for 
complex manifolds, treating the two components $\overline{\partial}$ and $\partial$ on equal footing,  give interesting and new 
biholomorphic invariants. For instance, in contrast with ordinary Massey products, such products do not necessarily vanish for compact Kähler manifolds~\cite{SPZ}.
It turns out that, at least in the compact case, the homotopy-theoretic notion of formality that is obstructed by these symmetric Massey 
products is governed by those maps of bicomplexes inducing isomorphisms both in Dolbeault and anti-Dolbeault cohomology~\cite{MiSte}. 
Such symmetric Massey products may be defined for any algebra in the category of bicomplexes, for which Dolbeault and anti-Dolbeault cohomologies are just the first pages
of the column and row spectral sequences respectively.
A model structure on bicomplexes related to this set-up is given in~\cite{Stelzig} as a 
special case of a general result on categories of modules over Frobenius algebras. 

A main motivation for the present note is to give a more transparent and direct construction of a symmetric-type model structure, 
exhibiting the sets of generating (trivial) cofibrations at the same time as we generalize the theory in two different directions: 
first, we study $N$-multicomplexes rather than bicomplexes and, second, we consider the class of weak equivalences given by isomorphisms 
at higher stages of the associated spectral sequences, rather than only the first pages.

Our main result is the following. Let $R$ be a commutative unital ring. 
We consider $N$-multicomplexes of $R$-modules, where $N\geq 2$ is a fixed finite natural number. The total differential decomposes 
as $d=d_0+\cdots+d_{N-1}$ and so the case $N=2$ is just the bicomplex case.
Given integers $r,s\geq 0$, we let $\mathcal{E}_{r,s}$ denote the class of (strict) multicomplex maps inducing quasi-isomorphisms 
at the $\EI_r$ and $\EII_s$ pages respectively, where  $\EI$ 
and $\EII$ denote the two spectral sequences.
The classes of weak equivalences $\mathcal{E}_{r,s}$ 
contain all isomorphisms, satisfy the two-out-of-three property, are closed under retracts
and satisfy $\mathcal{E}_{r,s}\subseteq \mathcal{E}_{r'.s'}$ for all $r\leq r'$ and $s\leq s'$. In particular, the smallest class  
is $\mathcal{E}_{0,0}$, given by those maps of $N$-multicomplexes inducing isomorphisms at the first page of both spectral sequences.
Our main result is Theorem~\ref{T:model} in which we 
establish a 
right proper combinatorial model structure on the category of $N$-multicomplexes, with $\mathcal{E}_{r,s}$
as the class of weak equivalences. Fibrations in this case are given by those maps inducing bidegreewise surjective maps at 
$\EI_i$ and $\EII_j$, for every $0\leq i\leq r$ and 
$0\leq j\leq s$.

Let us next explain the relation of this result with existing work as well as some of its potential applications to geometry.

\subsection*{Previously studied asymmetric model structures}
As mentioned before, there is a model structure on $N$-multicomplexes with the class of weak equivalences $\mathcal{E}_r$ 
given by those (strict) multicomplex morphisms inducing a quasi-isomorphism at $\EI_r$ \cite{fglw22}. In this case, 
fibrations are  given by those maps inducing bidegreewise surjective maps at  $\EI_i$ for every $0\leq i\leq r$.
    We obviously have strict inclusions $\mathcal{E}_{r,s}\subset \mathcal{E}_r$
    for all $s\geq 0$ and, in fact, the identity adjunction gives a Quillen adjunction between the model categories associated to $\mathcal{E}_{r,s}$
    and $\mathcal{E}_r$ respectively. Note that this adjunction is not an equivalence.
    As shown in this note, the corresponding sets of generating (trivial) cofibrations are strongly related: Let $\II_r$ and $\JI_r$ denote the sets of generating cofibrations and trivial cofibrations respectively, given in the model structure with $\mathcal{E}_r$-equivalences of \cite{fglw22}. Define 
    $\III_r:=(\II_r)^\inv$ and $\JII_r:=(\JI_r)^\inv$, where $(-)^{\inv}$ denotes the involution on the category of $N$-multicomplexes. Then the sets of generating cofibrations and trivial cofibrations for the model structure with $\mathcal{E}_{r,s}$-equivalences are just given by $\II_r\cup \III_s$ and $\JI_r\cup \JII_s$ respectively.

\subsection*{Bicomplexes and complex manifolds}
There are two somewhat dual cohomological constructions, called \textit{Bott-Chern} and \textit{Aeppli cohomologies} respectively, that are special to bicomplexes. These were introduced in the setting of complex geometry and shown to interpolate between (anti-)Dolbeault and de Rham cohomologies. The class $\mathcal{W}$ of bicomplex maps inducing isomorphisms on both Bott-Chern and Aeppli cohomologies satisfies 
$\mathcal{W}\subseteq \mathcal{E}_{0,0}$ and, 
when restricted to the category of locally bounded bicomplexes we actually have an equality $\mathcal{W}=\mathcal{E}_{0,0}$. This is true, for instance, in the category of compact complex manifolds.
The model structure on the category of bicomplexes obtained by Stelzig in \cite{Stelzig} has $\mathcal{W}$ as the class of weak equivalences. Our model structure with $\mathcal{E}_{0,0}$ as weak equivalences is a right Bousfield localization of the model structure with $\mathcal{W}$ as weak equivalences.
 As mentioned before, a main motivation for doing homotopical algebra with respect to the class $\mathcal{W}$ is to compute new refined biholomorphic invariants for complex manifolds. The consideration of the weaker classes of equivalences $\mathcal{E}_{r,s}$ may allow for the definition and computation of such invariants.

 \subsection*{$4$-multicomplexes and almost complex manifolds}
 The above ideas apply to $N$-multicomplexes as well. A natural geometric source for multicomplexes is that of almost complex geometry. Indeed, the complexified de Rham algebra of an almost complex manifold still decomposes into bidegrees, but in this case the 
exterior differential decomposes into  
    four components,    $d=\overline{\mu}+\overline{\partial}+\partial+\mu$. 
    The component $\mu$ and its complex conjugate $\overline\mu$ are manifestations of the Nijenhuis tensor, in the sense that integrability happens if and only if $\overline{\mu}=\mu=0$ (and in this case one has a complex manifold).
A Dolbeault cohomology theory and a Frölicher-type spectral sequence for almost complex manifolds was introduced in \cite{CWDol} and further studied by various authors (see for instance \cite{CPS}, \cite{SiTo}, \cite{CWfour}).
 Note that a standard definition for Bott-Chern or Aeppli cohomologies for the case of $N$-multicomplexes is not avaliable in the literature, although there are some candidates in the case $N=4$ (see for instance \cite{CPS}). The current state-of-the-art asks for a study of almost complex manifolds through the lenses of $\mathcal{E}_{r,s}$-equivalences considered in the present note. In this case, Dolbeault cohomology in the sense of \cite{CWDol} corresponds to the $\EI_2$-page, and so it might be particularly interesting to study the homotopy types of almost complex manifolds with respect to the class $\mathcal{E}_{2,2}$. 
 This is  uncharted territory and the present note paves the way for first exploration steps. 
A vast generalization of this set-up, also giving rise to 4-multicomplexes, is that of  \textit{almost Dirac structures} \cite{dirac}. These encompass  
multiple types of geometric settings including symplectic, Poisson, (almost) complex and
 generalized (almost) complex geometry.

  \medskip

\subsection*{Acknowledgements} 
We would like to thank Jonas Stelzig for useful exchanges.

\section{Notations and preliminaries}
\label{S:preliminaries}

\subsection{Multicomplexes and involution}

Throughout this paper, $\kk$ will denote a commutative unital ground ring and $\kk\Mod$ will denote the category of $\kk$-modules.

\begin{defn} A \emph{bigraded $\kk$-module} is a collection of $\kk$-modules $A = \{A^{p,q}\}_{(p,q)\in\Z^2}$. We denote by $\bimod$ the category whose objects are bigraded $\kk$-modules and whose morphisms are maps $f:A\to B$ of bigraded $\kk$-modules of bidegree $(0,0)$, that is, a collection of $\kk$-module morphisms $f^{p,q}:A^{p,q}\to B^{p,q}$.\\ 
For $N\in \N$,  an \emph{$N$-multicomplex}  $A$ is a bigraded $\kk$-module  endowed with a family of maps $\{d_i \colon A \to A\}_{i \ge 0}$ of bidegree $(-i,1-i)$  satisfying for all $l \ge 0$,
  \begin{equation}\label{E:multicomplex}
    \sum_{i+j=l} (-1)^i d_id_j=0,
  \end{equation}
  and for all $i\geq N,\; d_i=0$. 

 A  \emph{\textup(strict\textup) morphism} of $N$-multicomplexes is a map $f$ in $\bimod$ satisfying $d_if=fd_i$ for all $i\geq 0$.
  We denote by $\ncpx$ the category of $N$-multicomplexes and strict morphisms.
\end{defn}

For example, 
a $2$-multicomplex is a bicomplex, with the convention $d_0d_1=d_1d_0$.

We note that, in contrast to the situation in~\cite{fglw22}, for the results of this paper it is important that $N$ is finite and we will not consider the case $N=\infty$. 

In~\cite[Section 3.1]{celw18} it is proved that the category of twisted complexes (where objects are multicomplexes and morphisms are $\infty$-morphisms between them) is closed symmetric monoidal, and we can deduce that this is also the case for the category $\ncpx$. This also holds for the category $\bimod$. The categories $\bimod$ and $\ncpx$ are complete and cocomplete; for $\ncpx$ this is a
consequence of Lemma~\ref{L:basis0} below showing that it can be viewed as a 
module category.

Note as well that our conventions do not agree with the bidegrees and signs 
arising in (almost) complex geometry, but this is of course harmless and the reader can adapt our results to such conventions. Instead, we have chosen our bidegree and sign conventions to agree with the previous works \cite{celw} and \cite{fglw22}.

\begin{notation}\label{algebrasnotation} 
Let us consider $\kk\langle \delta_0,\delta_1,\ldots,\delta_{N-1}\rangle$, the free associative $\kk$-algebra in the monoidal category $\bimod$ generated by symbols $\delta_i$  of bidegree $(-i,1-i)$, for $0\leq i\leq N-1$. Define 
$S_l=  \sum_{i+j=l} (-1)^i \delta_i\delta_j$, for $0\leq l\leq 2(N-1)$. We denote by $\Aa_N$ the $\kk$-algebra obtained as the quotient of $\kk\langle \delta_0,\delta_1,\ldots,\delta_{N-1}\rangle$ by the ideal generated by $\{S_l\, |\, 0\leq l\leq 2(N-1)\}$.
\end{notation}

\begin{rem} The category of $N$-multicomplexes is isomorphic to the category of $\Aa_N$-modules, as shown in the next lemma.
Note that  the definition of $N$-multicomplexes is equivalent to that where one requires $\sum_{i+j=N} d_id_j=0$.  
Hence the category of $N$-multicomplexes is also isomorphic to that of $\Aa'_N$-modules where $\Aa'_N$ is defined similarly 
to $\Aa_N$ replacing $S_l$ by $S'_l=\sum_{i+j=l} \delta_i\delta_j$.
In~\cite{Algebra}
it is shown that, when $R$ is a field of characteristic zero, the algebras $\Aa'_2$ (resp.~$\Aa'_4$) are the universal 
enveloping algebras of Lie algebras $\mathfrak{g}_2$ (resp.~$\mathfrak{g}_4$). These two Lie algebras
 are quasi-isomorphic when given the natural differential $ad_d=[d,-]$. Note that $\mathfrak{g}_2$
is finite-dimensional and has $ad_d=0$, while $\mathfrak{g}_4$ is infinite-dimensional but has 
non-trivial $ad_d$, giving $H^*(\mathfrak{g}_4, ad_d)\cong \mathfrak{g}_2$. 
It is an easy check to see that for any $N$, $\Aa'_N$ is the universal enveloping algebra of a Lie algebra, 
say $\mathfrak{g}_N$. Whether $\mathfrak{g}_N$ is quasi-isomorphic to $\mathfrak{g}_2$ 
when given the natural differential $ad_d$ would be an interesting question, but is not the purpose of our paper.
\end{rem}

\begin{lemma}\label{L:basis0} The category $\ncpx$ is isomorphic to the category of left $\Aa_N$-modules.\\
For each $(p,q)\in\Z^2$, the $\kk$-module $\Aa_N^{p,q}$ is a free $\kk$-module of finite rank. A basis of $\Aa_N$ as a free bigraded $\kk$-module is given by the set
\[\left\{\delta_{N-1}^\epsilon \delta_{i_1}\ldots\delta_{i_l}\delta_0^w\;|\; l\geq 0, \epsilon,w\in\{0,1\}, 0<i_j<N-1\right\}.\]
\end{lemma}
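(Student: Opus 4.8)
The plan is to prove this in two parts: the identification of categories, and the explicit basis. For the first part, I would spell out the isomorphism of categories directly. Given an $N$-multicomplex $A$ with structure maps $\{d_i\}_{0\le i\le N-1}$, the equations \eqref{E:multicomplex} for $0\le l\le 2(N-1)$ are precisely the relations $S_l=0$ after setting $\delta_i$ to act as $d_i$; since $d_i=0$ for $i\ge N$, every higher relation $S_l=0$ with $l>2(N-1)$ is automatic. Thus each $N$-multicomplex gives a left $\Aa_N$-module structure on the underlying bigraded $\kk$-module (bidegrees match because $\delta_i$ has bidegree $(-i,1-i)$). Conversely, an $\Aa_N$-module $M$ yields an $N$-multicomplex by letting $d_i$ be the action of $\delta_i$; here one must check $d_i=0$ for $i\ge N$, which holds because there is no generator $\delta_i$ for $i\ge N$ in $\Aa_N$. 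Strict morphisms correspond exactly to $\Aa_N$-linear maps of bidegree $(0,0)$, since $\Aa_N$ is generated by the $\delta_i$, so commuting with each $\delta_i$ is equivalent to $\Aa_N$-linearity. These two assignments are mutually inverse and functorial, giving the isomorphism of categories.

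For the second part, I would show that the stated set $B=\{\delta_{N-1}^\epsilon \delta_{i_1}\cdots\delta_{i_l}\delta_0^w \mid l\ge 0,\ \epsilon,w\in\{0,1\},\ 0<i_j<N-1\}$ is a $\kk$-basis of $\Aa_N$. The natural approach is a rewriting/normal-form argument (a noncommutative Gröbner basis style argument). The free algebra $\kk\langle\delta_0,\ldots,\delta_{N-1}\rangle$ has the monomials in the $\delta_i$ as a basis. I would use the relations $S_l=0$ to rewrite. Solving $S_l=0$ for the "extreme" term: for $0\le l\le N-1$ the relation $S_l = 2\delta_0\delta_l - \delta_l\delta_0 + (\text{terms }\delta_i\delta_j,\ 0<i,j) = 0$ (up to sign bookkeeping) lets us rewrite $\delta_0\delta_l$, so we can push a single $\delta_0$ to the right past any inner generator; iterating, at most one $\delta_0$ survives and it sits at the far right, accounting for $\delta_0^w$ with $w\in\{0,1\}$. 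Symmetrically, for $N\le l\le 2(N-1)$ the relation $S_l=0$ involves $\delta_{N-1}\delta_{l-N+1}$ as its extreme term and lets us rewrite any occurrence of $\delta_{N-1}$ that is not leftmost, pushing $\delta_{N-1}$ to the left; iterating leaves at most one $\delta_{N-1}$, at the far left, giving $\delta_{N-1}^\epsilon$. After both reductions any monomial is a $\kk$-linear combination of elements of $B$, so $B$ spans. For linear independence I would either invoke the Diamond Lemma (Bergman) after checking that all ambiguities (overlaps of the leading terms of the rewriting rules) resolve, or give a direct dimension count: compute the bigraded Hilbert series of the span of $B$ and compare with a known presentation. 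Finiteness of rank in each bidegree $(p,q)$ then follows since in $B$ one has $p = -(\sum i_j) - \epsilon(N-1)$ and $q-p = l+\epsilon+w$ essentially controls the word length, so only finitely many words of $B$ land in a fixed bidegree.

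The main obstacle is the linear independence of $B$, i.e. showing the spanning set is not "too large". Spanning is a routine (if fiddly) rewriting computation, but to conclude it is a basis one needs a genuine argument: the cleanest is to verify the confluence hypotheses of the Diamond Lemma for the rewriting system just described (choose a monomial order making $\delta_0$-on-the-left and $\delta_{N-1}$-not-on-the-left the reducible patterns, then check that the finitely many overlap ambiguities — chiefly $\delta_0\delta_{N-1}$-type overlaps and overlaps among consecutive $S_l$ relations — are resolvable). Once confluence is established, the irreducible monomials are exactly the elements of $B$, and the Diamond Lemma simultaneously yields that $B$ is a $\kk$-basis; freeness over $\kk$ and finite rank in each bidegree are then immediate. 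I would present the $N=2$ case first as a sanity check, where $\Aa_2=\kk\langle\delta_0,\delta_1\rangle/(\delta_0\delta_1-\delta_1\delta_0)$ is a polynomial ring and $B=\{\delta_1^\epsilon\delta_0^w\}$ indeed gives the standard monomial basis.
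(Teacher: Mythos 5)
Your plan matches the paper's proof: the category isomorphism is established by letting $\delta_i$ act as $d_i$, and the basis is obtained by verifying that $\{S_l\}$ is a noncommutative Gr\"obner basis via Bergman's Diamond Lemma under the degree-lexicographic order with $\delta_0>\cdots>\delta_{N-1}$, the irreducible words being exactly those avoiding $\delta_0\delta_k$ and $\delta_k\delta_{N-1}$ ($k\geq 1$). One small slip: for $N\leq l\leq 2(N-1)$ the leading term of $S_l$ is $\delta_{l-N+1}\delta_{N-1}$, not $\delta_{N-1}\delta_{l-N+1}$, consistent with your (correct) description of eliminating every non-leftmost occurrence of $\delta_{N-1}$.
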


\begin{proof} For $A$ an $N$-multicomplex and $a$ in $A$, the action of $\Aa_N$ is defined by $\delta_i\cdot a=d_i(a)$. 
This correspondence upgrades to an isomorphism of categories between $\ncpx$ and  left $\Aa_N$-modules.
Note that $\Aa_N$ is a quadratic algebra. We use a Gr\"obner basis to prove the second part of the Lemma 
(see e.g. \cite[Section 2]{Grobner95}). The set of words in the ordered alphabet 
\[X=\{\delta_0>\ldots>\delta_{N-1}\}\] 
is well ordered by the degree lexicographic order. 
The leading term of $S_l=\sum_{i+j=l} (-1)^i \delta_i\delta_j$ is $\delta_0\delta_l$ for 
$0\leq l\leq N-1$ and $\delta_{l-N+1}\delta_{N-1}$ 
for $N-1<l\leq 2(N-1)$.  
Using the composition lemma (also called Bergman's diamond lemma) we get that the set 
\[\{S_l\,|\,0\leq l\leq 2(N-1)\}\] 
is a Gr\"obner basis, and that a basis of $\Aa_N$ is given by the set of words in $X$ not having a subword of the form $\delta_0\delta_k, 0\leq k\leq N-1$ or of the form $\delta_k\delta_{N-1}, 1\leq k\leq N-1$. 
\end{proof}

For example 
\[\Aa_2=\kk<\delta_0,\delta_1>/(\delta_0^2,\delta_0\delta_1-\delta_1\delta_0,\delta_1^2)=\kk 1\oplus\kk\delta_0\oplus\kk\delta_1\oplus\kk\delta_1\delta_0.\]

\begin{notation} Let $\inv:\Z^2\to\Z^2$ be the $\Z$-linear map defined by 
\[\inv(p,q)=((N-2)p-(N-1)q,(N-3)p-(N-2)q).\]
We have $\inv^2=\id_{\Z^2}$.
\end{notation}

\begin{lemma}The involution $\inv$ on $\Z^2$ induces an involution, and thus an equivalence of categories
\[(-)^{\inv}\colon \ncpx\to\ncpx\]
defined on objects by $(A^\inv)^{p,q}=A^{\inv(p,q)}$ and $(d^\inv)_i=d_{N-1-i}$ and on morphisms by 
$(f^\inv)^{p,q}=f^{\inv(p,q)}$ for $f:A\to B$.
\end{lemma}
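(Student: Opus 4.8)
The plan is to check directly that the three pieces of data in the statement --- the relabelling of bidegrees by $\inv$, the relabelling of structure maps by $i\mapsto N-1-i$, and the componentwise assignment on morphisms --- assemble into a well-defined endofunctor of $\ncpx$ which squares to the identity; being its own inverse, it is then automatically an equivalence.

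First I would check that $A^\inv$, with $(A^\inv)^{p,q}=A^{\inv(p,q)}$ and $(d^\inv)_i=d_{N-1-i}$, is again an $N$-multicomplex. Since $\inv$ is $\Z$-linear, the only point requiring computation is that $(d^\inv)_i$ has bidegree $(-i,1-i)$ as a map $A^\inv\to A^\inv$: the original map $d_{N-1-i}$ shifts bidegree by $(-(N-1-i),1-(N-1-i))=(i-N+1,i-N+2)$, and one computes $\inv(-i,1-i)=\bigl((N-2)(-i)-(N-1)(1-i),\,(N-3)(-i)-(N-2)(1-i)\bigr)=(i-N+1,i-N+2)$, so linearity of $\inv$ gives $\inv(p-i,q+1-i)=\inv(p,q)+(i-N+1,i-N+2)$, which is exactly the required shift on $(A^\inv)^{p,q}=A^{\inv(p,q)}$. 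Moreover $(d^\inv)_i=d_{N-1-i}=0$ for $i\geq N$, since then the index $N-1-i$ is negative, so $A^\inv$ is genuinely $N$-truncated. For the relations~\eqref{E:multicomplex}, I would reindex the sum by $a=N-1-i$, $b=N-1-j$: using $(-1)^i=(-1)^{N-1-a}=(-1)^{N-1}(-1)^a$, the expression $\sum_{i+j=l}(-1)^i(d^\inv)_i(d^\inv)_j$ becomes $(-1)^{N-1}\sum_{a+b=2(N-1)-l}(-1)^a d_a d_b$, whose inner sum vanishes because $A$ is an $N$-multicomplex (the relations~\eqref{E:multicomplex} together with $d_k=0$ for $k\geq N$ give $\sum_{a+b=m}(-1)^a d_a d_b=0$ for every $m\geq 0$, and for $l>2(N-1)$ all terms are already zero). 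The global sign $(-1)^{N-1}$ is harmless since the total is $0$.

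Next, for morphisms: if $f$ is strict, then $fd_j=d_jf$ for all $j$, hence $f^\inv(d^\inv)_i=fd_{N-1-i}=d_{N-1-i}f=(d^\inv)_if^\inv$ on the relevant bidegrees, so $f^\inv$ is strict; preservation of identities and of composition is immediate from the componentwise definition, so $(-)^\inv\colon\ncpx\to\ncpx$ is a functor. Finally, applying it twice one gets $((A^\inv)^\inv)^{p,q}=A^{\inv^2(p,q)}=A^{p,q}$ because $\inv^2=\id_{\Z^2}$, and $((d^\inv)^\inv)_i=(d^\inv)_{N-1-i}=d_{N-1-(N-1-i)}=d_i$; likewise $(f^\inv)^\inv=f$. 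Thus $(-)^\inv\circ(-)^\inv=\id_{\ncpx}$, so $(-)^\inv$ is an involution and in particular an equivalence of categories. There is no genuine obstacle in this argument: the only steps that deserve care are the bidegree identity $\inv(-i,1-i)=(i-N+1,i-N+2)$ and verifying that the sign reindexing in~\eqref{E:multicomplex} produces only a harmless overall factor $(-1)^{N-1}$.
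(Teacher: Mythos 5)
Your proof is correct and follows essentially the same route as the paper: the key step in both is the linearity of $\inv$ together with the computation $\inv(-i,1-i)=(-(N-1-i),1-(N-1-i))$, which identifies $(d_{N-1-i})^{\inv}$ as having the bidegree required of the $i$-th structure map. You additionally spell out the sign reindexing in the relations~\eqref{E:multicomplex}, the truncation for $i\geq N$, and involutivity, all of which the paper leaves implicit; these checks are carried out correctly.
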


\begin{proof} It is clear that the involution $\inv$ induces an involution on $\bimod$. Let $A\in\ncpx$.
The map $d_i$ is of bidegree $(-i,1-i)$, meaning that its component $d_i^{p,q}$ is a morphism of $\kk$-modules from $A^{p,q}$ to $A^{(p-i,q+1-i)}$. In particular, since the involution $\inv$ is $\Z$-linear, the induced morphism $(d_i)^{\inv}$ has bidegree 
$\inv(-i,1-i)=(-(N-1-i),1-(N-1-i))$.
\end{proof}

Note that if $N=2$ then $\inv(p,q)=(-q,-p)$ and $(d_0)^\inv=d_1, (d_1)^\inv=d_0$.
\smallskip

\subsection{Recollection from previous work}
We collect the results of \cite{fglw22} that are needed for the proof of the main theorem. We aim to be as brief and self-contained as possible.
In \cite{celw18} we have defined a functor $E$ from multicomplexes to spectral sequences. This functor $E$ associates to a multicomplex $A$ a filtered complex and then  a spectral sequence $(E_r(A))_{r\geq 0}$. 
In \cite{fglw22} a family of model category structures on multicomplexes and $N$-multicomplexes is defined,  using the functor $E$. In order to build these model category structures, the authors give another description of the pages of the associated spectral sequence that we recall here.

\begin{defn}(cf. \cite[Definition 3.4]{fglw22}).
  Let $A$ be an $N$-multicomplex and $r \ge 0$.

  Define the \emph{witness $r$-cycles} $ZW_r^{p,q}(A)$ to be the bigraded $\kk$-modules 
  $ZW_0^{p,q}(A) = A^{p,q}$
  and for $r\ge 1$,
  \begin{align*}
    ZW_r^{p,q}(A)=\{(a_0, a_1,\ldots, a_{r-1}) \mid &~a_i \in A^{p-i, q-i}~\text{for}~0\leq i\leq r-1~\text{such~that}\\
      &~\sum_{i+j=l} (-1)^id_ia_j = 0~\text{for}~0\leq l\leq r-1
    \}.
  \end{align*}
 Define the \emph{witness $r$-boundaries} to be the bigraded $\kk$-modules $BW_0^{p,q}(A) = 0$, $BW_1^{p,q}(A) = A^{p,q}$ and for $r \ge 2$,
  \[
    BW_r^{p,q-1}(A)=ZW_{r-1}^{p+r-1,q+r-2}(A) \oplus A^{p,q-1} \oplus ZW_{r-1}^{p-1,q-1}(A).
  \]

  The bidegree $(0,1)$ map of bigraded $\kk$-modules
  \[
    w_r \colon BW_r^{p,q-1}(A) \to ZW_r^{p,q}(A)
  \] 
  is given by $w_0=0$, $w_1=d_0$ and for $r\ge 2$,
  \begin{multline*}
    (b_0,\dots,b_{r-2}; a; c_0,\dots,c_{r-2})\\
    \overset{w_r}\longmapsto \big(d_0a+\sum_{i=1}^{r-1} (-1)^i d_ib_{r-1-i}, d_1a-\sum_{i=2}^{r} (-1)^i d_ib_{r-i}+c_0, d_2a+ \sum_{i=3}^{r+1} (-1)^i d_ib_{r+1-i}+c_1, \dots, \\ 
    d_{r-1}a+(-1)^{r-1}\sum_{i=r}^{2r-2} (-1)^id_ib_{2r-2-i} +c_{r-2} \big).
  \end{multline*}
\end{defn}

\begin{prop}[cf.~\protect{\cite[Proposition 3.7]{fglw22}}] Let $A$ be an $N$-multicomplex.
  For every $r \ge 0$,  
  \[E_r^{p,q}(A)\cong ZW_r^{p,q}(A)/w_r(BW^{p,q-1}_r(A)).\]
\end{prop}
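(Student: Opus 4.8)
The plan is to prove the isomorphism $E_r^{p,q}(A)\cong ZW_r^{p,q}(A)/w_r(BW^{p,q-1}_r(A))$ by reducing, via \cref{L:basis0}, to the already-established description of the pages of the spectral sequence $E$ for multicomplexes from \cite{celw18, fglw22}. Since an $N$-multicomplex is an $\Aa_N$-module, and $\Aa_N$ sits inside the algebra $\Aa_\infty$ governing arbitrary (non-truncated) multicomplexes, every $N$-multicomplex is in particular a multicomplex with $d_i=0$ for $i\geq N$; the functor $E$ and the formulas for $ZW_r$, $BW_r$, $w_r$ are natural and depend only on the structure maps $d_0,\dots,d_{r-1}$, so for $r\leq N$ the $N$-truncation is invisible and the formula is literally that of \cite[Proposition 3.7]{fglw22}. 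The only genuine new content, then, is the case $r>N$, where the ambient multicomplex has some structure maps forced to vanish.

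Concretely, I would first recall (or cite) from \cite{fglw22} the identification $E_r^{p,q}(A)\cong ZW_r^{p,q}(A)/w_r(BW^{p,q-1}_r(A))$ valid for a general multicomplex $A$ and all $r\geq 0$, pointing out that this is exactly \cite[Proposition 3.7]{fglw22} combined with the fact that the functor $E$ used there agrees with the one defined in \cite{celw18}. Then I would observe that an object of $\ncpx$, regarded as a multicomplex, has $d_i=0$ for all $i\geq N$, and that the right-hand side $ZW_r^{p,q}(A)/w_r(BW^{p,q-1}_r(A))$ is built out of sums $\sum_{i+j=l}(-1)^i d_i a_j$ and $\sum_i (-1)^i d_i b_{\ast}$ in which any term involving a $d_i$ with $i\geq N$ simply drops out. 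Hence the right-hand side computed in $\ncpx$ coincides with the right-hand side computed in the category of (untruncated) multicomplexes applied to the same underlying object, and likewise $E_r$ is unchanged because $E$ is defined by forgetting to multicomplexes. This reduces the statement to the cited proposition.

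The one point requiring a little care — and the place I expect to spend the most effort — is checking that the inclusion $\ncpx\hookrightarrow\text{multicomplexes}$ really does commute with $E$ on the nose (not just up to isomorphism of spectral sequences), and that the formulas for $ZW_r$, $BW_r$, $w_r$ quoted in the preceding \textbf{Definition} agree verbatim with those of \cite[Definition 3.4]{fglw22} under this inclusion, including all signs and index ranges. Once that bookkeeping is done, no computation is needed: the isomorphism is inherited term by term. I would therefore structure the proof as: (1) invoke \cite[Proposition 3.7]{fglw22} for general multicomplexes; (2) note that $\ncpx$ embeds into multicomplexes as those with $d_i=0$, $i\geq N$, compatibly with $E$ and with the bigraded modules $ZW_r, BW_r$ and the maps $w_r$; (3) conclude that the stated isomorphism for objects of $\ncpx$ is the restriction of the one in (1). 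An alternative, fully self-contained route would be to rerun the proof of \cite[Proposition 3.7]{fglw22} directly in $\ncpx$, but since the argument there is insensitive to the vanishing of higher $d_i$ this would only duplicate existing work, so the reduction is preferable.
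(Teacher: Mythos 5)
Your proposal is correct and matches the paper's approach: the paper states this proposition purely as a recollection, citing \cite[Proposition 3.7]{fglw22} with no further argument, and your reduction of the $N$-truncated case to that result (via $d_i=0$ for $i\geq N$) is exactly the implicit content of the citation. The extra bookkeeping you flag is harmless but not needed, since the formulas for $ZW_r$, $BW_r$ and $w_r$ are taken verbatim from \cite[Definition 3.4]{fglw22}.
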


\begin{rem}Recall that the first page of the spectral sequence is computed using the differential $d_0$. The map induced by $d_1$ on the first page is a differential and the second page of the spectral sequence is the homology with respect to this differential. On higher pages, the differential is described using the witnesses as shown in~\cite{lwz}.
\end{rem}

The witness $r$-cycles and $r$-boundaries define functors $\ncpx\to \kk\Mod$ and they are represented by $N$-multicomplexes $\Zzw^N_r(p,q)$ and $\Bbw_r^N(p,q)$. The map $w_r$ is represented by a morphism of $N$-multicomplexes
$\iota_r \colon \Zzw^N_r(p,q) \to \Bbw^N_r(p,q-1)$  (see Section 3 in \cite{fglw22}).

\begin{defn}\label{defIrJr}   For $r\geq 0$, consider the sets of morphisms of $N$-multicomplexes
  \[I^N_r=\left\{\xymatrix{\Zzw^N_{r+1}(p,q)\ar[r]^-{\iota_{r+1}} & \Bbw^N_{r+1}(p,q-1)}\right\}_{\substack{p,q\in\Z}} 
    \text{ and } 
  J^N_r=\left\{\xymatrix{0\ar[r]& \Zzw^N_{r}(p,q)}\right\}_{\substack{p,q\in\Z}}.\]
  Let $\II_r$ and $\JI_r$ be the sets of morphisms in $\ncpx$ given by 
  \[\II_r:=\cup_{k=1}^{r-1} J^N_k\cup I^N_r\text{ and }
  \JI_r:=\cup_{k=0}^r J^N_k.\]
\end{defn}

\begin{thm}(cf.~\cite[Theorem 3.30]{fglw22}).\label{modelE} 
  For every $r\geq 0$, 
  the category $\ncpx$ admits a right proper cofibrantly generated model structure, denoted $(\ncpx)_r$, where: 
  \begin{enumerate}
    \item A weak equivalence $f$ is a morphism in $\ncpx$ such that $E_r(f)$ is a quasi-isomorphism.
    \item A fibration $f$ is a  morphism in $\ncpx$
      such that $E_i(f)$ is  bidegreewise surjective for every $0\leq i\leq r$. 
    \item $\II_r$ and $\JI_r$ are the sets of generating cofibrations and generating trivial cofibrations respectively. \qed
  \end{enumerate}
\end{thm}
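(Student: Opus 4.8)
The plan is to verify the hypotheses of the recognition theorem for cofibrantly generated model structures, taking $\mathcal{E}_r$ as the prospective class of weak equivalences, $\II_r$ as generating cofibrations and $\JI_r$ as generating trivial cofibrations. The background conditions are routine: $\ncpx\cong\Aa_N\Mod$ is bicomplete and locally presentable by Lemma~\ref{L:basis0}, so every object is small and the small object argument applies to $\II_r$ and $\JI_r$; and $\mathcal{E}_r$ contains all isomorphisms and inherits the two-out-of-three property and closure under retracts from the corresponding facts for quasi-isomorphisms of complexes, since $E_r$ is a functor to complexes of $\kk$-modules.

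The first substantial step is to identify the maps having the right lifting property against the two generating sets, combining Yoneda's lemma with the representability of the witness functors. A map $f\colon A\to B$ has the right lifting property against $J^N_k=\{0\to\Zzw^N_k(p,q)\}_{p,q}$ precisely when $ZW^{p,q}_k(f)$ is surjective for all $(p,q)$, so $f$ lies in $\JI_r$-inj iff $ZW_k(f)$ is bidegreewise surjective for every $0\le k\le r$. I would then run an induction on $k$, using the surjection $ZW_k\twoheadrightarrow E_k$ coming from the page formula together with the exact sequences relating $ZW_{k+1}$, $ZW_k$ and $E_k$ that are built into the witness definitions, to show that this is equivalent to $E_i(f)$ being bidegreewise surjective for $0\le i\le r$; this yields the stated description of fibrations. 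For the generating cofibrations, $f$ lies in $\II_r$-inj iff $ZW_k(f)$ is bidegreewise surjective for $1\le k\le r-1$ and $f$ has the right lifting property against $\iota_{r+1}\colon\Zzw^N_{r+1}(p,q)\to\Bbw^N_{r+1}(p,q-1)$; unwinding the latter through the representation of $w_{r+1}$ — and exploiting the $A^{p,q-1}$-summand appearing in $BW^{p,q-1}_{r+1}(A)$ to recover bidegreewise surjectivity of the underlying map — one should obtain $\II_r\text{-inj}=\JI_r\text{-inj}\cap\mathcal{E}_r$, i.e.\ that these are exactly the fibrations which are also $\mathcal{E}_r$-equivalences.

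It then remains to show that relative $\JI_r$-cell complexes are trivial cofibrations. The inclusion $\JI_r\subseteq\II_r$-cof is nearly formal: $J^N_k\subseteq\II_r$ for $1\le k\le r-1$, and for $k=0$ and $k=r$ one checks directly that $0\to\Zzw^N_k(p,q)$ lifts against every map in $\II_r$-inj using the characterization just obtained. For the weak-equivalence part, since $E_r$ commutes with the coproducts and filtered colimits defining cell attachments — the witness objects being finite limits and finite direct sums of representable module functors — it suffices to show that each $\Zzw^N_k(p,q)$ with $k\le r$ is $\mathcal{E}_r$-acyclic, i.e.\ $0\to\Zzw^N_k(p,q)$ is an $\mathcal{E}_r$-equivalence; this is again a direct reading of the witness formulas (for instance when $k=0$ the object is a free $\Aa_N$-module with contractible $E_0$-complex). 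At that point the recognition theorem delivers the cofibrantly generated model structure with the claimed weak equivalences, fibrations, and generating (trivial) cofibrations. Right properness is then automatic: $E_i(A\to 0)$ is trivially bidegreewise surjective, so the terminal map of every object is a fibration, every object is fibrant, and a model category in which all objects are fibrant is right proper.

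I expect the main obstacle to be the second paragraph: the explicit bookkeeping identifying $\II_r$-inj and $\JI_r$-inj with the homotopy-theoretic classes of (trivial) fibrations — especially showing that the single lifting condition against $\iota_{r+1}$ simultaneously encodes surjectivity and injectivity of $E_r$ together with bidegreewise surjectivity of the underlying map. This is precisely where the explicit witness-cycle formulas of the preceding definition, and their exact-sequence relationships, do the real work.
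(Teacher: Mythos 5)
This theorem is quoted from \cite[Theorem 3.30]{fglw22} and the paper gives no proof of it (hence the \qed in the statement); your outline correctly reconstructs the standard recognition-theorem argument used there, and it is the same strategy the paper itself follows for the generalization in Theorem~\ref{T:model} (following \cite[Theorem 2.1.19]{Hovey}). The steps you flag as the real work --- identifying $\II_r$-inj and $\JI_r$-inj via representability of the witness functors, and the $E_r$-acyclicity of the $\Zzw^N_k(p,q)$ for $k\leq r$ (which is exactly Proposition~\ref{P:E1Z}) --- are indeed where the content lies, and your sketch of each is sound.
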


\begin{notation} Let $\III_s=(\II_s)^{\inv}$ and $\JII_s=(\JI_s)^{\inv}$.
\end{notation}

\section{The model category structures}

\subsection{Main Theorem}

Similarly to the situation for bicomplexes, the involution $\inv$ is used to consider another spectral sequence associated to an $N$-multicomplex $A$. Namely, we denote by $\EI(A)$ the spectral sequence $E(A)$  and by $\EII(A)$ the spectral sequence $E(A^\inv)$.

\begin{defn}
 A morphism of $N$-multicomplexes $f \colon A\to B$ is said to be an \textit{$E_{r,s}$-quasi-isomorphism} if
  the morphism $\EI_r(f) \colon \EI_r(A)\to \EI_r(B)$ at the $r$-page of the associated first spectral 
  sequence is a quasi-isomorphism and  the morphism $\EII_s(f) \colon \EII_s(A)\to \EII_s(B)$ at the $s$-page of the associated second spectral 
  sequence is a quasi-isomorphism.
\end{defn}

Denote by $\Ee_{r,s}$ the class of $E_{r,s}$-quasi-isomorphisms. 
This class contains all isomorphisms of $\cpx{N}$, satisfies the two-out-of-three property and is closed under retracts.

\begin{thm}\label{T:model} 
  For every $r,s\geq 0$, 
  the category $\ncpx$ admits a cofibrantly generated model structure, $(\ncpx)_{r,s}$, where: 
  \begin{enumerate}
    \item $\Ee_{r,s}$ is the class of weak equivalences.
        \item A fibration $f$ is a  morphism 
      such that $\EI_i(f)$ and $\EII_j(f)$ are  bidegreewise surjective for every $0\leq i\leq r$ and $0\leq j\leq s$. 
    \item $I_{r,s}=\II_r\cup \III_s$ and $J_{r,s}=\JI_r\cup \JII_s$ are the sets of generating cofibrations and generating trivial cofibrations respectively. 
     \end{enumerate}
Moreover, the model category $(\ncpx)_{r,s}$ is right proper, combinatorial and tractable.
\end{thm}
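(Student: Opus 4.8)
The plan is to obtain the model structure by a standard application of the recognition theorem for cofibrantly generated model categories (Kan's theorem, as in Hirschhorn or Hovey), using $I_{r,s}=\II_r\cup\III_s$ and $J_{r,s}=\JI_r\cup\JII_s$ as the proposed generating sets, and then to build the extra properties (right properness, combinatoriality, tractability) on top of that. The crucial point, which makes this feasible, is that $\III_s$ and $\JII_s$ are literally the images under the category equivalence $(-)^\inv$ of the sets $\II_s$ and $\JI_s$ appearing in \Cref{modelE} for the model structure $(\cpx{N})_s$ on $\EII$. Consequently, any statement about $\III_s$ or $\JII_s$ transports across $(-)^\inv$ to the corresponding statement about $\II_s$ or $\JI_s$: for instance, $\III_s\text{-inj}$ (resp.\ $\JII_s\text{-inj}$) consists of the maps $f$ with $f^\inv$ a fibration (resp.\ trivial fibration) in $(\cpx{N})_s$, i.e.\ the maps $f$ with $\EII_j(f)$ bidegreewise surjective for $0\le j\le s$ (resp.\ and moreover $\EII_s(f)$ a quasi-isomorphism), by \Cref{modelE}. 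The same applies to identify $\JII_s\text{-cell}$ and to verify smallness of the domains of $\III_s$ and $\JII_s$ (all of $\Zzw^N_r(p,q)$ and $\Bbw^N_r(p,q-1)$ are bounded, hence small, in $\ncpx$, which is locally presentable by \Cref{L:basis0}).

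First I would establish the closure and limit axioms: $\ncpx$ is complete and cocomplete and locally presentable (it is a module category over $\Aa_N$ by \Cref{L:basis0}), and $\Ee_{r,s}$ is a class satisfying two-out-of-three, closed under retracts, containing the isomorphisms, as already noted in the excerpt. Next, I would identify the two candidate classes of fibrations and trivial fibrations. By definition, a map lies in $(I_{r,s})\text{-inj}$ iff it lies in both $(\II_r)\text{-inj}$ and $(\III_s)\text{-inj}$; the first, by \Cref{modelE}, is the class of trivial fibrations in $(\cpx{N})_r$, i.e.\ maps $f$ with $\EI_i(f)$ bidegreewise surjective for $0\le i\le r$ and $\EI_r(f)$ a quasi-isomorphism; the second, by the involution argument above, is the analogous $\EII$-condition up to degree $s$. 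Similarly $(J_{r,s})\text{-inj}=(\JI_r)\text{-inj}\cap(\JII_s)\text{-inj}$ is the class of maps $f$ with $\EI_i(f)$ and $\EII_j(f)$ bidegreewise surjective for $0\le i\le r$, $0\le j\le s$ — precisely the fibrations of statement~(2). The key lemma, then, is that a map is in $(I_{r,s})\text{-inj}$ iff it is in $(J_{r,s})\text{-inj}$ \emph{and} in $\Ee_{r,s}$; one inclusion is immediate from the characterizations just given, and the converse amounts to showing that a fibration which is an $E_{r,s}$-quasi-isomorphism is already a trivial fibration in each of the two one-sided model structures — and this is exactly the corresponding ``acyclicity'' statement proved in \cite{fglw22} for $(\cpx{N})_r$, applied once directly for $\EI$ and once through $(-)^\inv$ for $\EII$.

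The remaining axiom to check is that $J_{r,s}\text{-cell}\subseteq\Ee_{r,s}\cap(I_{r,s})\text{-cof}$. The inclusion into $(I_{r,s})\text{-cof}$ is routine: $J^N_k$-cells are already $\II_{\max(r,s)}$-cofibrations by \cite{fglw22}, and dually for their involutes, and a small-object-argument bookkeeping shows $J_{r,s}\text{-cell}\subseteq(I_{r,s})\text{-cof}$. The substantive part is that a relative $J_{r,s}$-cell complex is an $E_{r,s}$-quasi-isomorphism. Here I would argue factorwise: a pushout of a map in $\JI_r$ is an $\EI_r$-trivial cofibration (from \cite{fglw22}) and it is also an $\EII_s$-quasi-isomorphism because attaching a $\Zzw^N_r(p,q)$-cell along $0$ adds a contractible-in-the-relevant-sense summand — the point is that $\Zzw^N_r(p,q)$, and its involute, have trivial $E$-pages from the relevant index onward, so these elementary pushouts are $\Ee_{r,s}$-equivalences; then one passes to transfinite compositions using that $\Ee_{r,s}$ is closed under such (filtered colimits commute with the finitely-many witness functors computing $\EI_i,\EII_j$ up to the fixed finite bound, since $N$ is finite — this is exactly where finiteness of $N$ is used). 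I expect \textbf{this step — verifying that relative $J_{r,s}$-cell complexes are $E_{r,s}$-quasi-isomorphisms, and in particular the compatibility between the $\EI$-attachments and the $\EII$-pages (and vice versa)} — to be the main obstacle, since it is the only place where the two one-sided theories genuinely interact rather than being handled independently.

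Finally, the extra adjectives come for free or nearly so. Cofibrant generation is what we have just proved. Combinatoriality follows since $\ncpx$ is locally presentable and $I_{r,s}$, $J_{r,s}$ are (small) sets of maps between presentable objects; tractability follows because the domains of all maps in $I_{r,s}$ and $J_{r,s}$ — namely $0$, the $\Zzw^N_{r+1}(p,q)$, and their involutes — are cofibrant (for $0$ trivially; for the others by the corresponding fact in \cite{fglw22} and its involute). Right properness: a base change of an $E_{r,s}$-quasi-isomorphism along a fibration is again one. By the fibration characterization this reduces, via the witness-module descriptions of the pages $\EI_i$ and $\EII_j$, to an exactness statement for pullbacks of bidegreewise surjections, which is precisely the right-properness argument of \cite{fglw22} run once for $\EI$ and once, through $(-)^\inv$, for $\EII$; intersecting the two conclusions gives right properness of $(\ncpx)_{r,s}$.
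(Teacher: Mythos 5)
Your overall strategy (Kan's recognition theorem applied to $I_{r,s}=\II_r\cup \III_s$ and $J_{r,s}=\JI_r\cup \JII_s$, with $I_{r,s}\text{-inj}$ and $J_{r,s}\text{-inj}$ identified by intersecting the one-sided characterizations of \cite{fglw22} with their transports along $(-)^{\inv}$) is the paper's strategy, and your treatment of smallness, combinatoriality and tractability matches the paper's. Your route for the acyclicity condition differs only in flavour: you check $J_{r,s}\text{-cell}\subseteq \Ee_{r,s}$ cell by cell and then pass to transfinite compositions, whereas the paper shows directly that any map $f\colon A\to B$ with the LLP against fibrations lies in $\Ee_{r,s}$, via a retract/lifting argument against the projection $A\oplus\bigl(\Zzw^N_r(0,0)\otimes B\oplus(\Zzw^N_s(0,0))^{\inv}\otimes B\bigr)\to B$. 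Both arguments reduce to the same input: the behaviour of \emph{both} spectral sequences on the representing objects $\Zzw^N_k(p,q)$.

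That input is where your proposal has a genuine gap. You assert that the generating objects and their involutes have acyclic $E$-pages from the relevant index onward for both spectral sequences. For $\EI(\Zzw^N_k(p,q))$ this is indeed \cite{fglw22} (Proposition~\ref{P:E1Z}), and applying $(-)^{\inv}$ yields the $\EII$-pages of the involutes; but the vanishing of $\EII_i(\Zzw^N_k(p,q))$ for $i\geq 1$ (equivalently, the $\EI$-pages of the involutes) is \emph{not} in \cite{fglw22} and cannot be obtained by transport along the involution --- it is exactly the point where the two one-sided theories interact, which you correctly single out as the main obstacle but then treat as known. The paper devotes Proposition~\ref{P:E2Z} to it: by induction on the defining pushouts, using the Gr\"obner basis of $\Aa_N$ from Lemma~\ref{L:basis0} and the relation $\delta_0\cdot\bigl(\sum_{j=1}^{k}(-1)^j\delta_j\cdot a_{k-j}\bigr)=0$, one produces an explicit free $\kk$-basis of $\Zzw^N_k(p,q)$ exhibiting it as a free module over the exterior subalgebra generated by $\delta_{N-1}$, whence its $d_{N-1}$-homology, i.e.\ $\EII_1$, vanishes. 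Without this computation neither your cell-attachment argument nor the paper's retract argument closes, so you should supply it (or an equivalent). As a minor point, right properness is immediate in the paper because every object is fibrant; your pullback-exactness argument is more work than needed, though not wrong.
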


We prove this theorem in subsection \ref{S:proof}. Before proving it we need some technical lemmas on the computation of the two spectral sequences  $\EI(\Zzw^N_r(p,q))$ and $\EII(\Zzw^N_s(p,q))$.

\subsection{Spectral sequences of the witness cycles}
We start by recalling the construction of the $N$-multicomplexes $\Zzw^N_r(p,q)$ representing the witness $r$-cycles.

\begin{defn}(cf.~\cite[Definition 3.15]{fglw22})\label{D:curlyZW}
The $N$-multicomplex $\Zzw^N_0(p,q)$ is the free left $\Aa_N$-module generated by an  element $x_{p,q}$ of bidegree $(p,q)$.
  Define the $N$-multicomplex $\Zzw^N_1(p,q)$ to be the pushout
  \begin{center}
    \begin{tikzcd}
      \Zzw^N_0(p,q+1) \arrow[r,"d_0^*"] \arrow[d] & \Zzw^N_0(p,q) \arrow[d,"j_0"] \\
      0 \arrow[r] & \Zzw^N_{1}(p,q)
    \end{tikzcd}
  \end{center}
  in the category of $N$-multicomplexes, and for $r \geq 2$, define $\Zzw^N_r(p,q)$ recursively to be the pushout 
  \begin{center}
    \begin{tikzcd}
      \Zzw^N_0(p-r+1,q-r+2) \arrow[r, "d_0^*"] \arrow[d,"D_{r-1}^*"'] & \Zzw^N_0(p-r+1,q-r+1) \arrow[d,"j_{r-1}"] \\
      \Zzw^N_{r-1}(p,q) \arrow[r,"i_{r-1}"] & \Zzw^N_{r}(p,q)
    \end{tikzcd}
  \end{center}
  in the category of $N$-multicomplexes. 
The morphism $d_0^*$ is given by
  \[
      d_0^*(x_{p-r+1,q-r+2}) = d_0x_{p-r+1,q-r+1}.
  \]
  For $r\geq 1$ we  denote the element $j_{r-1}(x_{p-r+1,q-r+1})$ by $a_{r-1}$.
 For $r \geq 2$, we recursively define the morphism $D_{r-1}^*$ to be
  \[
    D_{r-1}^*(x_{p-r+1,q-r+2}) = \sum_{i=1}^{r-1}(-1)^{i+1}d_ia_{r-1-i}.
  \]
By abuse of notation, we denote the elements $i_{r-1}(a_j)$ ($0 \leq j \leq r-2$) by $a_j$.\end{defn}

\begin{rem} Because the categories of $N$-multicomplexes and left $\Aa_N$-modules are isomorphic, the pushout diagrams considered in the definition can be considered as pushout diagrams in the category of left $\Aa_N$-modules. This approach will be more convenient to describe basis elements as was done in Lemma \ref{L:basis0}.
\end{rem}

\begin{prop}(cf.~\cite[Corollary 6.4]{fglw22})\label{P:E1Z} For every $p,q \in \Z$, $k\geq 0$, we have
\[\EI_i(\Zzw^N_{k}(p,q))=\begin{cases}
\kk^{p,q}\oplus\kk^{p-k,q-k+1}& \text{if } 1\leq i\leq k\\
0& \text{if } i\geq k+1.\end{cases}\]
\end{prop}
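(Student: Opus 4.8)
The statement is a computation of the first spectral sequence $\EI_i$ of the witness-cycle $N$-multicomplex $\Zzw^N_k(p,q)$, and it is already stated as a consequence of \cite{fglw22}. So my approach would be to prove it by induction on $k$, exploiting the recursive pushout description of $\Zzw^N_k(p,q)$ from \Cref{D:curlyZW}. The base case $k=0$ is immediate: $\Zzw^N_0(p,q)$ is a free $\Aa_N$-module on a single generator $x_{p,q}$, its underlying complex with differential $d_0$ computes the $\EI_1$-page, and one checks directly that $\EI_i$ of a free $\Aa_N$-module on one generator is concentrated as claimed; in particular for $k=0$ both branches of the formula give $0$ for $i\geq 1$, which matches. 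The case $k=1$ is the pushout killing $d_0 x_{p,q+1}$ against $0$: here $\Zzw^N_1(p,q)$ is the cofiber of $d_0^*\colon \Zzw^N_0(p,q+1)\to\Zzw^N_0(p,q)$, and one reads off $\EI_1 = \kk^{p,q}\oplus\kk^{p-1,q}$ with the differential $d_1$, $d_2,\dots$ acting trivially on what survives, so $\EI_i$ stabilizes already at $i=1$.

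For the inductive step $k\geq 2$, the key tool is the behaviour of the functor $E$ and its pages under the pushout square
\begin{center}
    \begin{tikzcd}
      \Zzw^N_0(p-k+1,q-k+2) \arrow[r, "d_0^*"] \arrow[d,"D_{k-1}^*"'] & \Zzw^N_0(p-k+1,q-k+1) \arrow[d] \\
      \Zzw^N_{k-1}(p,q) \arrow[r] & \Zzw^N_{k}(p,q).
    \end{tikzcd}
\end{center}
The plan is to recognize that this pushout is, up to the usual shift, a mapping-cone construction: $\Zzw^N_k(p,q)$ is obtained from $\Zzw^N_{k-1}(p,q)$ by attaching a free cell along the map $D_{k-1}^*$ and simultaneously killing $d_0$ of the new generator. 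One then sets up the long exact sequence of $\EI$-pages associated to such an attachment (these are exactly the tools developed in \cite{fglw22}, and I would cite the relevant exactness statements there), feed in the inductive hypothesis for $\Zzw^N_{k-1}(p,q)$, namely $\EI_i=\kk^{p,q}\oplus\kk^{p-k+1,q-k+2}$ for $1\leq i\leq k-1$ and $0$ for $i\geq k$, and the known page-wise behaviour of the free module $\Zzw^N_0(p-k+1,q-k+1)$. The new generator contributes a class in bidegree $(p-k+1,q-k+1)$ and its $d_0$-partner has been annihilated; chasing the connecting maps, the effect is to replace the summand $\kk^{p-k+1,q-k+2}$ by $\kk^{p-k,q-k+1}$ precisely on pages $i=k$, while leaving pages $1\leq i\leq k-1$ untouched (where the two complexes already agree) and keeping pages $i\geq k+1$ zero because both the old $\EI_{\geq k}$ vanishes and the attaching data is exhausted by stage $k$.

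The main obstacle is bookkeeping the precise page on which the connecting homomorphism in the $\EI$-long exact sequence is nonzero: one must show that the differential $D_{k-1}^*$, whose formula is $\sum_{i=1}^{k-1}(-1)^{i+1}d_i a_{k-1-i}$, detects the class $a_{k-1}$ exactly at page $k$ (not earlier, not later), so that the summand $\kk^{p,q}$ of $\Zzw^N_{k-1}(p,q)$ survives to $\EI_k(\Zzw^N_k(p,q))$ but the witness structure forces the shifted summand to move from $(p-k+1,q-k+2)$ at page $k-1$ to $(p-k,q-k+1)$ at page $k$, and then everything stabilizes. This is the part that genuinely uses the recursive shape of $D_{k-1}^*$ and the multicomplex relations \eqref{E:multicomplex}; once it is pinned down, the remaining identifications are routine degree counting. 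Since the result is quoted from \cite[Corollary 6.4]{fglw22}, in practice I would either reproduce this induction in a few lines or simply refer the reader there, noting that the same argument applies verbatim to $\EII$ of $\Zzw^N_s(p,q)$ after applying the involution $(-)^\inv$.
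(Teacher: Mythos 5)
The paper does not prove this proposition at all: it is quoted verbatim from \cite[Corollary 6.4]{fglw22}, and the only argument the authors supply elsewhere (for the companion statement on $\EII_i(\Zzw^N_k(p,q))$, Proposition~\ref{P:E2Z}) is an explicit Gr\"obner-basis description of $\Zzw^N_k(p,q)$ as a free bigraded $\kk$-module, not a long exact sequence argument. So falling back on the citation, as you offer to do at the end, is exactly what the paper does. The sketched induction itself, however, has a genuine error. In the case $k=1$ you assert that ``$d_1, d_2,\dots$ act trivially on what survives, so $\EI_i$ stabilizes already at $i=1$.'' This contradicts the very statement you are proving, which says $\EI_i(\Zzw^N_1(p,q))=0$ for $i\geq 2$. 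In fact $d_1[a_0]=[d_1a_0]$ is precisely the generator of the summand $\kk^{p-1,q}$ on the $\EI_1$-page, so the page-one differential is an isomorphism between the two summands and kills everything at page $2$. The same mechanism is the whole content of the general case: the two classes in bidegrees $(p,q)$ and $(p-k,q-k+1)$ survive to page $k$ and are then cancelled by the page-$k$ differential (note that $(p,q)-(k,k-1)=(p-k,q-k+1)$). Your inductive step asserts the vanishing for $i\geq k+1$ only via ``the attaching data is exhausted,'' which does not supply this cancellation and is inconsistent with your own base case.

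A second, softer concern: the tool you lean on --- a long exact sequence of $\EI_i$-pages associated to the pushout --- is not available off the shelf. For $i\geq 1$ the functors $E_i$ are subquotients and short exact (or cofiber) sequences of multicomplexes do not in general induce long exact sequences on higher pages; the exactness statements in \cite{fglw22} require the attaching maps to be cofibrations in the relevant $r$-structure, and verifying that hypothesis here is essentially equivalent to the computation you are trying to perform. You correctly flag that locating the exact page on which $D_{k-1}^*$ detects $a_{k-1}$ is ``the main obstacle,'' but that obstacle is the entire proof, and it is resolved in \cite{fglw22} (and in Proposition~\ref{P:E2Z} of this paper) by writing down an explicit basis and computing witness cycles and boundaries directly rather than by a connecting-map chase.
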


\begin{prop}\label{P:E2Z} The $N$-multicomplex $\Zzw_k^N(p,q)$ is a free bigraded $\kk$-module. For $k>0$,
 a basis of $\Zzw_k^N(p,q)$ as a free bigraded $\kk$-module is given by the set
\[\left\{\delta_{N-1}^\epsilon \delta_{i_1}\ldots\delta_{i_l}\cdot a_j\;|\; l\geq 0, \epsilon\in\{0,1\}, 0<i_t<N-1, 0\leq j\leq k-1\right\}\]
where the element $a_j$ has bidegree $(p-j,q-j)$.\\
For every $p,q \in \Z$, $k\geq 0$ and $i\geq 1$, we have
\[\EII_i(\Zzw^N_{k}(p,q))=0.\]

\begin{proof} The first assertion is proved by induction on $k\geq 1$. We prove, in addition that in the left $\Aa_N$-module 
$\Zzw^N_k(p,q)$ we have 
 \begin{align}\label{E}
 \delta_0\cdot\left(\sum_{j=1}^{k}(-1)^{j} \delta_j \cdot a_{k-j}\right)=0.
 \end{align}

By definition $\Zzw^N_0(p,q)$ is the free  left $\Aa_N$-module generated by an element denoted $x_{p,q}$ of bidegree $(p,q)$. The map  $d_0^*$ acts on the basis of the free bigraded $\kk$-module $\Zzw^N_0(p,q+1)$ described in Lemma \ref{L:basis0} as
\[d_0^*(\delta_{N-1}^\epsilon\delta_{i_1}\ldots\delta_{i_l}\delta_0^w\cdot x_{p,q+1})=\begin{cases}
\delta_{N-1}^\epsilon\delta_{i_1}\ldots\delta_{i_l}\delta_0 \cdot x_{p,q}&\text{ if } w=0\\
0 &\text{ if } w=1.\end{cases}\]
Denoting by $a_0$ the class of $x_{p,q}$ in $\Zzw^N_1(p,q)=\Coker d_0^*$ we have the result for $k=1$.
In addition, $\delta_0\cdot\delta_1\cdot a_0=\delta_1\cdot (\delta_0 \cdot a_0)=\overline{d_0^*(\delta_1\cdot x_{p,q+1})}=0$. This proves Equation (\ref{E}) for $k=1$. \\
Assume the result is proved for $1\leq j<k$.
We have
\[D_{k-1}^*(\delta_{N-1}^\epsilon\delta_{i_1}\ldots\delta_{i_l}\cdot x_{p-k+1,q-k+2})=
\sum_{j=1}^{k-1}(-1)^{j+1}\delta_{N-1}^\epsilon\delta_{i_1}\ldots\delta_{i_l}\delta_j \cdot a_{k-1-j}\]
where $a_{i}$ has bidegree $(p-i,q-i)$. Moreover, Equation (\ref{E}) implies 
\[D_{k-1}^*(\delta_{N-1}^\epsilon\delta_{i_1}\ldots\delta_{i_l}\delta_0\cdot x_{p-k+1,q-k+2})=0.\]
Therefore, any element of the form
$\delta_{N-1}^\epsilon\delta_{i_1}\ldots\delta_{i_l}\delta_0 \cdot x_{p-k+1,q-k+1}$ in $\Zzw_0(p-k+1,q-k+1)$ is identified with the element $D_{k-1}^*(\delta_{N-1}^\epsilon\delta_{i_1}\ldots\delta_{i_l}\cdot x_{p-k+1,q-k+2})$ and this is the only possible type of identification in the pushout diagram. Hence the bigraded $\kk$-module  splits as $\Zzw_k^N(p,q)=\Zzw_{k-1}^N(p,q)\oplus U(p,q)$ where $U(p,q)$ is the bigraded $\kk$-module spanned by the set $\{\delta_{N-1}^\epsilon \delta_{i_1}\ldots\delta_{i_l}\cdot a_{k-1}\}$.
Let us prove Equation (\ref{E}). By definition we have, for all $u<k$
\[\delta_0 a_u=\sum_{j=1}^{u} (-1)^{j+1}\delta_j\cdot a_{u-j}.\]
As a consequence
\begin{align*}
 \delta_0\cdot\left(\sum_{j=1}^{k}(-1)^{j} \delta_j \cdot a_{k-j}\right)&=
 \sum_{j\geq 1} (-1)^j\left(\sum_{a+b=j,a,b>0} (-1)^{a+1}\delta_a\delta_b\cdot a_{k-j}+(-1)^{j+1}\delta_j\delta_0\cdot a_{k-j}  \right)\\
&= \sum_{a,b>0} (-1)^{b+1}\delta_a\delta_b\cdot a_{k-a-b}+\sum_{j,r>0} (-1)^{r}\delta_j\delta_r\cdot a_{k-j-r}  =0.
\end{align*}
The description of the basis shows that the homology of the complex $\Zzw_k^N(p,q)$ with respect to the differential $d_{N-1}$ is $0$. This homology coincides 
(up to regrading) with the first page of the second spectral sequence, since $(d_{N-1})^\inv=d_0:\Zzw_k^N(p,q)^\inv\to \Zzw_k^N(p,q)^\inv$.
\end{proof}

\end{prop}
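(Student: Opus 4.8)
The plan is to prove the explicit basis by induction on $k$, carrying along as part of the inductive hypothesis the auxiliary identity $\delta_0\cdot\bigl(\sum_{j=1}^{k}(-1)^j\,\delta_j\cdot a_{k-j}\bigr)=0$ in the left $\Aa_N$-module $\Zzw_k^N(p,q)$; the vanishing of the second spectral sequence will then drop out of the shape of the basis. Throughout I would work with left $\Aa_N$-modules rather than $N$-multicomplexes, so that the pushout squares of Definition~\ref{D:curlyZW} and the $\Aa_N$-basis of Lemma~\ref{L:basis0} are directly available. The case $k=0$ is just Lemma~\ref{L:basis0}, since $\Zzw_0^N(p,q)\cong\Aa_N$ as a bigraded $\kk$-module.

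For the base case $k=1$, recall $\Zzw_1^N(p,q)=\Coker d_0^*$ with $d_0^*\colon\xi\cdot x_{p,q+1}\mapsto\xi\delta_0\cdot x_{p,q}$. Applying $d_0^*$ to the $\Aa_N$-basis elements $\delta_{N-1}^\epsilon\delta_{i_1}\cdots\delta_{i_l}\delta_0^w\cdot x_{p,q+1}$ and using the relation $\delta_0^2=0$ (the relation $S_0$ of $\Aa_N$), one finds that $d_0^*$ carries the basis elements with $w=0$ bijectively onto the basis elements with $w=1$ and kills those with $w=1$. Hence $\Coker d_0^*$ is free on the images of the $w=0$ basis elements, which is exactly the asserted basis $\{\delta_{N-1}^\epsilon\delta_{i_1}\cdots\delta_{i_l}\cdot a_0\}$; and $\delta_0\delta_1\cdot a_0$ is the class of $d_0^*(\delta_1\cdot x_{p,q+1})$, hence zero, which is the auxiliary identity at $k=1$.

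For the inductive step I would expand the pushout defining $\Zzw_k^N(p,q)$ as a quotient of $\Zzw_{k-1}^N(p,q)\oplus\Zzw_0^N(p-k+1,q-k+1)$ by the $\Aa_N$-submodule generated by $\bigl(D_{k-1}^*(y),-d_0^*(y)\bigr)$, where $y=x_{p-k+1,q-k+2}$. Evaluating $d_0^*$ and $D_{k-1}^*$ on the $\Aa_N$-basis of $\Zzw_0^N(p-k+1,q-k+2)$: just as in the base case, $d_0^*$ kills the $w=1$ basis elements; and the inductive form of the auxiliary identity for $\Zzw_{k-1}^N(p,q)$ forces $D_{k-1}^*$ to kill them as well, since for $\xi=\eta\delta_0$ one has $D_{k-1}^*(\eta\delta_0\cdot y)=\eta\cdot\delta_0\bigl(\sum_i(-1)^{i+1}\delta_i\cdot a_{k-1-i}\bigr)=0$. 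Consequently the only relations imposed identify $\xi\delta_0\cdot a_{k-1}$ (for $\xi=\delta_{N-1}^\epsilon\delta_{i_1}\cdots\delta_{i_l}$) with the corresponding element of $\Zzw_{k-1}^N(p,q)$, so the bigraded $\kk$-module splits as $\Zzw_k^N(p,q)=\Zzw_{k-1}^N(p,q)\oplus U(p,q)$ with $U(p,q)$ free on the $w=0$ elements $\{\delta_{N-1}^\epsilon\delta_{i_1}\cdots\delta_{i_l}\cdot a_{k-1}\}$; combined with the inductive hypothesis this is the claimed basis. It then remains to re-establish the auxiliary identity at level $k$: substitute $\delta_0\cdot a_u=\sum_{j\ge 1}(-1)^{j+1}\delta_j\cdot a_{u-j}$ (valid for all $u\le k-1$) into $\delta_0\cdot\bigl(\sum_j(-1)^j\delta_j\cdot a_{k-j}\bigr)$ and collapse the resulting double sum using the quadratic relations $S_l=\sum_{a+b=l}(-1)^a\delta_a\delta_b=0$; this is a short sign bookkeeping computation.

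Finally, for the second spectral sequence, note that $\EII(A)=\EI(A^\inv)$ and $(d^\inv)_0=d_{N-1}$, so $\EII_1(\Zzw_k^N(p,q))$ is, up to regrading, the homology of $\bigl(\Zzw_k^N(p,q),d_{N-1}\bigr)$, and it is enough to show this vanishes since each higher page $\EII_i$ with $i\ge 1$ is a subquotient of it. Using the basis just obtained (or Lemma~\ref{L:basis0} when $k=0$), the action of $d_{N-1}$ is left multiplication by $\delta_{N-1}$, which sends a basis element with $\epsilon=0$ to the basis element with $\epsilon=1$ obtained by prepending $\delta_{N-1}$, and sends a basis element with $\epsilon=1$ to $0$ because $\delta_{N-1}^2=0$ (the relation $S_{2(N-1)}$). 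Thus $\bigl(\Zzw_k^N(p,q),d_{N-1}\bigr)$ is a direct sum of acyclic two-term complexes and has zero homology. The main obstacle is the bookkeeping in the inductive step — specifically, checking that the pushout imposes no relations beyond the expected ones, which relies on having the auxiliary identity available at the previous level, together with the sign computation showing that identity propagates; once the basis is in hand, the spectral-sequence statement is essentially formal.
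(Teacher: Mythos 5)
Your proposal is correct and follows essentially the same route as the paper: induction on $k$ carrying the auxiliary identity $\delta_0\cdot\bigl(\sum_{j=1}^{k}(-1)^{j}\delta_j\cdot a_{k-j}\bigr)=0$, analysis of the pushout on $\Aa_N$-basis elements to get the splitting $\Zzw_k^N(p,q)=\Zzw_{k-1}^N(p,q)\oplus U(p,q)$, and then the vanishing of $\EII_1$ via the $d_{N-1}$-homology (with the higher pages handled as subquotients, a point the paper leaves implicit). The only difference is that you leave the sign bookkeeping for propagating the auxiliary identity as a described-but-not-executed computation, which is exactly the computation the paper carries out explicitly.
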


\subsection{Proof of Theorem \ref{T:model}}\label{S:proof} The proof is standard and follows the lines of \cite[Theorem 2.1.19]{Hovey}. The class of maps $\Ee_{r,s}$ satisfies the two-out-of-three property and is closed under
retracts.  The domains of $I_{r,s}$ are compact relative to $I_{r,s}$-cell.  The domains of $J_{r,s}$ are compact relative to $J_{r,s}$-cell.
A map $f$ has the right lifting property with respect to $I_{r,s}$ if and only if is has the right lifiting  property with respect to $\II_r$ and $\III_s$. Theorem \ref{modelE} states that this amounts to $f$ satisfying the following conditions: $\EI_r(f)$
is a quasi-isomorphism,
$\EI_i(f)$ is surjective for all $0\leq i\leq r$, $\EI_s(f^\inv)$ is a quasi-isomorphism and $\EI_j(f^\inv)$ is surjective for all $0\leq j\leq s$. In conclusion $f$  has the right lifting property with respect to $I_{r,s}$ if and only if $f$ is an acyclic fibration.  Let $f\colon A\to B$ be a morphism in $\ncpx$ such that $f$ has the left lifting property with respect to fibrations. To show that $f\in\Ee_{r,s}$ we consider the following diagram
\[\xymatrix{ A\ar[r]^-{\vmat{1}{0}}\ar[d]_f & A\oplus B'\ar[d]^{(f,p)}\\ B\ar[r]^{=} & B}\]
where $B'=\Zzw^N_r(0,0)\otimes B\oplus \Zzw^N_s(0,0)^\inv\otimes B$ and $p$ is the projection onto $B$ induced by the projections $q_r\colon \Zzw^N_r(0,0)\to\kk(0,0)$ and $(q_s)^\inv$, where $\kk(0,0)=\kk(0,0)^\inv$ is the bigraded $\kk$-module, free of rank 1 and concentrated in bidegree $(0,0)$  with 
$d_i=0$ for all $i$.
By Proposition \ref{P:E1Z},  the projection $q_r\colon \Zzw^N_r(0,0)\to\kk(0,0)$ satisfies $\EI_i(q_r)$ is surjective for all $0\leq i\leq r$.
This induces that $\EII_j(q_s^\inv)$ is surjective for all $0\leq j\leq s$. 
In particular, we have
$\EI_i(p)$ and $\EII_j(p)$  are surjective for $0\leq i\leq r$ and $0\leq j\leq s$, hence $p$ is a fibration and $f$ admits a lift in the diagram $(g,s)\colon B\to A\oplus B'$. By Propositions \ref{P:E1Z} and \ref{P:E2Z} we have that $\EI_i(\Zzw^N_r(0,0))$ and $\EII_i(\Zzw^N_s(0,0))$ are free bigraded $\kk$-modules and we have $\EI_{r+1}(B')=0=\EII_{s+1}(B')$. 
This implies that both $\EI_r(f)$ and $\EII_s(f)$ are quasi-isomorphisms. \\
The model category is right proper because every object is fibrant. By definition, a combinatorial model category is a cofibrantly generated model category which is locally presentable as a category. We have seen in Lemma~\ref{L:basis0} that the category $\ncpx$ is isormophic to the category of left $\Aa_N$-modules. Hence, it is locally (finitely) presentable by \cite[Example 5.2.2a]{Borceux2}. By ~\cite[Corollary 2.7]{Barwick2010}, to prove that the combinatorial model category is a tractable category it is enough to prove that the morphisms in the set $I_{r,s}$ have cofibrant domain. The $N$-multicomplex $\Zzw_{r+1}^N(p,q)$ is cofibrant in $(\ncpx)_{r+1,s}$ hence is cofibrant in $(\ncpx)_{r,s}$. Similarly $(\Zzw_{s+1}^N(p,q))^\inv$ is cofibrant in $(\ncpx)_{r,s+1}$, hence is cofibrant in $(\ncpx)_{r,s}$.
\qed

\subsection{Comparison with previous results}

The following result compares the model structures of our main Theorem~\ref{T:model}
with the asymmetric model structures of~\cite{fglw22}.

\begin{prop}
Let $N\geq 2$ and $r,s \geq 0$.
    The identity adjunction
 \[\xymatrix{
 (\cpx{N})_r\ar@<1ex>[rr]^-{\id} && (\cpx{N})_{r,s} \ar@<1ex>[ll]^-{\id}_-{\perp} }\]
    is a Quillen adjunction but not a Quillen equivalence.
\end{prop}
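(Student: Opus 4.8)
The plan is to verify the two assertions separately. For the Quillen adjunction, recall that a left adjoint between model categories is left Quillen iff it preserves cofibrations and trivial cofibrations; since here the left adjoint is the identity functor $\id\colon(\cpx{N})_r\to(\cpx{N})_{r,s}$, this amounts to checking that every cofibration (resp.\ trivial cofibration) of $(\cpx{N})_r$ is a cofibration (resp.\ trivial cofibration) of $(\cpx{N})_{r,s}$. Equivalently, passing to right adjoints, it suffices to show that every fibration of $(\cpx{N})_{r,s}$ is a fibration of $(\cpx{N})_r$ and every trivial fibration of $(\cpx{N})_{r,s}$ is a trivial fibration of $(\cpx{N})_r$. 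By the explicit description of fibrations in Theorems~\ref{modelE} and~\ref{T:model}, a fibration $f$ in $(\cpx{N})_{r,s}$ has $\EI_i(f)$ bidegreewise surjective for $0\leq i\leq r$, which is exactly the fibration condition in $(\cpx{N})_r$. For the trivial fibrations, the weak equivalences of $(\cpx{N})_{r,s}$ are a subclass of those of $(\cpx{N})_r$ (the inclusion $\Ee_{r,s}\subseteq\Ee_r$ noted in the introduction), so a trivial fibration of $(\cpx{N})_{r,s}$ is in particular a fibration and a weak equivalence of $(\cpx{N})_r$. This gives the Quillen adjunction. (Alternatively, and perhaps more cleanly, one can argue directly on generating cofibrations: $I_{r,s}=\II_r\cup\III_s\supseteq\II_r$ shows the generating cofibrations of $(\cpx{N})_r$ form a subset of those of $(\cpx{N})_{r,s}$, and similarly $J_{r,s}\supseteq\JI_r$; then combine with $\Ee_{r,s}\subseteq\Ee_r$.)

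For the failure of Quillen equivalence, the cleanest route is to observe that the two model structures have the \emph{same} underlying category and the identity left adjoint is the identity functor, so it is a Quillen equivalence iff it induces an equivalence of homotopy categories, which happens iff the two classes of weak equivalences coincide: $\Ee_r=\Ee_{r,s}$. Thus it suffices to exhibit a single morphism of $N$-multicomplexes lying in $\Ee_r$ but not in $\Ee_{r,s}$. A natural candidate is the map $0\to\Zzw^N_{r}(0,0)$, or more to the point a map detecting the second spectral sequence: take the zero object mapping to an $N$-multicomplex $A$ with $\EI_r(A)=0$ but $\EII_s(A)\neq 0$ (so $0\to A$ is an $\EI_r$-quasi-isomorphism but not an $\EII_s$-quasi-isomorphism). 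For instance one can use $A=(\Zzw^N_{s+1}(0,0))^\inv$ truncated or, more simply, $A=\kk(0,0)^{\inv}$-type objects; the point is that by Proposition~\ref{P:E1Z} applied to $(-)^\inv$ one controls $\EII$ while $\EI$ can be made to vanish on an object that is $\EI_r$-acyclic. I would pick the most economical explicit example: the identity map is a weak equivalence in $(\cpx{N})_r$ on \emph{all} objects whose $\EI_r$ is already computed, but there exist nonzero maps that are $\EI_r$-quasi-isomorphisms while changing $\EII_s$; for $N=2$, $r=s=0$ one can take the standard bicomplex ``square'' or ``zigzag'' examples from the literature on Bott--Chern/Aeppli cohomology where a map kills the column spectral sequence's first page but not the row's.

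The main obstacle is producing a clean, self-contained counterexample rather than the formalities of the adjunction, which are essentially bookkeeping with the explicit (co)fibration descriptions already available. I would anchor the counterexample on the representing objects $\Zzw^N_\bullet$ already in play: by Propositions~\ref{P:E1Z} and~\ref{P:E2Z}, $\Zzw^N_k(p,q)$ has $\EI_i=0$ for $i\geq k+1$ and $\EII_i=0$ for all $i\geq 1$, while $(\Zzw^N_k(p,q))^\inv$ has $\EII_i=0$ for $i\geq k+1$ and $\EI_i=0$ for all $i\geq 1$. Hence the map $0\to(\Zzw^N_{s+1}(0,0))^\inv$ has $\EI_i=0$ for all $i\geq 1$ — in particular $\EI_r$ is a quasi-isomorphism, so it lies in $\Ee_r$ — but $\EII_s\big((\Zzw^N_{s+1}(0,0))^\inv\big)\neq 0$ by Proposition~\ref{P:E1Z} (read off via the involution), so it is not an $\EII_s$-quasi-isomorphism and hence not in $\Ee_{r,s}$. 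This shows $\Ee_{r,s}\subsetneq\Ee_r$, the identity functor does not take $\Ee_r$-equivalences to $\Ee_{r,s}$-equivalences, and therefore the Quillen adjunction cannot be a Quillen equivalence. I would present this example as a short displayed computation citing Propositions~\ref{P:E1Z} and~\ref{P:E2Z}, and keep the adjunction verification to one or two sentences invoking the fibration descriptions and the inclusion $\Ee_{r,s}\subseteq\Ee_r$.
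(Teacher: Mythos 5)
Your argument is correct and follows the same overall strategy as the paper: verify the Quillen adjunction via the explicit fibration descriptions and the inclusion $\Ee_{r,s}\subseteq\Ee_r$, then refute the Quillen equivalence by exhibiting an $\Ee_r$-equivalence with cofibrant domain and fibrant codomain that fails to be an $\Ee_{r,s}$-equivalence. The difference is in the witness: the paper uses $\pi\colon \Zzw^N_\infty(p,q)=\varinjlim_k\Zzw^N_k(p,q)\to\kk(p,q)$, which is a cofibrant replacement of $\kk(p,q)$ in $(\ncpx)_r$ by an external result of \cite{fglw22}, and has $\EII_1=0$ on the source but not the target. Your map $0\to(\Zzw^N_{s+1}(0,0))^{\inv}$ is arguably more economical: the domain $0$ is trivially cofibrant, all objects are fibrant, and the required spectral sequence computations ($\EI_i$ of the target vanishes for all $i\geq 1$ while $\EII_{s+1}$ does not) follow directly from Propositions~\ref{P:E1Z} and~\ref{P:E2Z} via the involution, with no further citation needed. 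Two small points of precision. First, your blanket claim that the identity adjunction is a Quillen equivalence iff $\Ee_r=\Ee_{r,s}$ is not immediate as stated: a Quillen equivalence is only required to detect weak equivalences between cofibrant domains and fibrant codomains, so the correct criterion here is that every $\Ee_r$-equivalence with $(\cpx{N})_r$-cofibrant domain lies in $\Ee_{r,s}$ (this does imply $\Ee_r=\Ee_{r,s}$ by cofibrant replacement and two-out-of-three, but that deduction should be made explicit or, better, bypassed entirely since your example already has cofibrant domain). Second, to conclude that $0\to(\Zzw^N_{s+1}(0,0))^{\inv}$ is not an $\EII_s$-quasi-isomorphism you need $\EII_{s+1}\bigl((\Zzw^N_{s+1}(0,0))^{\inv}\bigr)=\EI_{s+1}\bigl(\Zzw^N_{s+1}(0,0)\bigr)\neq 0$, i.e.\ the nonvanishing of the $(s{+}1)$-page rather than of the $s$-page; Proposition~\ref{P:E1Z} with $k=s+1$ and $i=s+1$ supplies exactly this. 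With these adjustments, and with the exploratory middle paragraph replaced by the final concrete computation, your proof is complete.
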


\begin{proof}
    It is clear that the right adjoint $(\cpx{N})_{r,s}\to (\cpx{N})_{r}$
    preserves weak equivalences and fibrations, so this is a Quillen
    adjunction.

    Let $p,q \in \Z$, let $\Zzw_\infty^N(p,q)=\varinjlim_s \Zzw_s^N(p,q)$
    and consider $\pi\colon\Zzw_\infty^N(p,q)\to \kk(p,q)$ given by projection to $\kk(p,q)$. 
    By~\cite[Proposition 6.7]{fglw22}, this is a cofibrant replacement of $\kk(p,q)$ in the model category $(\ncpx)_r$
for all $r\geq 0$. But $\EII_1(\Zzw_\infty^N(p,q))=0$, so $\pi$ is not
in $\Ee_{r,s}$ for any $s$.
Thus the left adjoint does not preserve weak equivalences with cofibrant
domain (and fibrant codomain). So the adjunction is not a Quillen equivalence.
\end{proof}

\begin{rem}
    In the case $r=s=0$, $\id\colon (\cpx{N})_{0,0}\to (\cpx{N})_{0}$ is a right Bousfield localization, since the two categories have the same fibrations, namely maps which are surjective, and $\Ee_{0,0}\subset \Ee_0$.  
\end{rem}

\begin{rem}
    The involution gives a Quillen equivalence
    \[(-)^{\inv}\colon (\ncpx)_{r,s}\to (\ncpx)_{s,r}.\]
\end{rem}

\section{Comparing bicomplexes and $4$-multicomplexes}

In this section we would like to compare, with a view towards applications in complex geometry, the homotopy category of bicomplexes and that of $4$-multicomplexes via a Quillen adjunction

\[
    \begin{tikzcd}
      (\cpx{2})_{0,0} \arrow[r, "j"', "\perp" , shift right=1.5] &(\cpx{4})_{1,1} \arrow[l, "q"',shift right=1.5] 
    \end{tikzcd}
    \tag{$\star$}
  \]  
  which is stable under the involution.

\subsection{Definition of the adjunction}
Let $(M,d_0^M,d_1^M)$ be a bicomplex. We define  the 4-multicomplex $(j(M),0,d'_1,d'_2,0)$ as 
  \[ j(M)^{p,q}=M^{q,2q-p},\ d'_1=d_0^M, d'_2=d_1^M.\]
It is clear that $j$ induces a functor $j:\cpx{2}\to\cpx{4}$.

  Let $L$ be a $4$-multicomplex seen as a left-$\Aa_4$-module, where $\Aa_4$ is the $R$-algebra introduced in \ref{algebrasnotation}. The left $\Aa_4$-module $q'(L)=L/\Aa_4\delta_0 L+\Aa_4\delta_3L$,  satisfies
      \[\delta_0\cdot x=\delta_3\cdot x=0, \text{ for all } x\in q'(L).\]
      In particular, by setting $q(L)^{p,q}=q'(L)^{2p-q,p}$, the bigraded module $q(L)$ is a bicomplex with $d_0$ and $d_1$ induced by the action on $q'(L)$ of $\delta_1$ and $\delta_2$ respectively. 
      It is clear that $q$ induces a functor $q:\cpx{4}\to\cpx{2}$.

\begin{prop}
  The pair of functors $(q,j)$ is a Quillen adjunction.  
  \[
    \begin{tikzcd}
      (\cpx{2})_{0,0} \arrow[r, "j"', "\perp" , shift right=1.5] &(\cpx{4})_{1,1} \arrow[l, "q"',shift right=1.5] 
    \end{tikzcd}
      \]  
      Moreover the functors $q$ and $j$ preserve the involution.
\end{prop}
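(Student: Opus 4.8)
The plan is to deduce the adjunction from an algebra-level statement about the algebras of Notation~\ref{algebrasnotation} and then read off the model-theoretic properties from the very special shape of $j(M)$. First I would establish $q\dashv j$. The assignment $\delta_0,\delta_3\mapsto 0$, $\delta_1\mapsto\delta_0$, $\delta_2\mapsto\delta_1$ defines a surjection of $\kk$-algebras $\Aa_4\twoheadrightarrow\Aa_2$ whose kernel is the two-sided ideal $(\delta_0,\delta_3)$; equivalently $\Aa_4/(\delta_0,\delta_3)\cong\Aa_2$, which is immediate by comparing presentations since the only relations of $\Aa_4$ surviving the quotient are $\delta_1^2$, $\delta_1\delta_2-\delta_2\delta_1$ and $\delta_2^2$. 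By Lemma~\ref{L:basis0}, the left $\Aa_4$-module $q'(L)=L/(\Aa_4\delta_0 L+\Aa_4\delta_3 L)$ is then $\Aa_2\otimes_{\Aa_4}L$, i.e.\ extension of scalars along $\Aa_4\twoheadrightarrow\Aa_2$, while $j$, ignoring the regrading in its definition, is restriction of scalars along the same map; these are adjoint. Finally, the two regradings occurring in the definitions of $j$ and of $q$, namely $(p,q)\mapsto(q,2q-p)$ and $(p,q)\mapsto(2p-q,p)$, are mutually inverse elements of $\mathrm{GL}_2(\Z)$ (and the first carries the bidegrees of $d_0^M$, $d_1^M$ to those of the structure maps $d_1$, $d_2$ of a $4$-multicomplex), so regrading is an autoequivalence of $\bimod$ compatible with all of this, and $q\dashv j$ follows.

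Next I would prove the compatibility with the involution, since it is used in the last step. I claim $j\circ(-)^{\inv}=(-)^{\inv}\circ j$ strictly: for a bicomplex $M$, tracking bidegrees and structure maps through $\inv(p,q)=(-q,-p)$ for $N=2$, $\inv(p,q)=(2p-3q,p-2q)$ for $N=4$, and $(d^{\inv})_i=d_{N-1-i}$, both $j(M^{\inv})$ and $(j(M))^{\inv}$ come out to be the $4$-multicomplex whose only nonzero structure maps are $d_1=d_1^M$ and $d_2=d_0^M$ and whose bidegree-$(p,q)$ part is $M^{p-2q,\,-q}$. The statement for $q$ is then formal: $(-)^{\inv}\circ q\circ(-)^{\inv}$ is left adjoint to $(-)^{\inv}\circ j\circ(-)^{\inv}=j$, hence naturally isomorphic to $q$ (one can also check it directly, symmetrically to the computation for $j$).

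It remains to see that $(q,j)$ is a Quillen adjunction. A trivial fibration is a fibration that is a weak equivalence, so it is enough that $j$ send fibrations to fibrations and weak equivalences to weak equivalences. The essential point is that $j(M)$ has vanishing $d_0$- and $d_3$-components; consequently the page-$0$ differential of its first spectral sequence vanishes, so $\EI_0(j(M))=\EI_1(j(M))=j(M)$, and since the induced page-$1$ differential is $d_1=d_0^M$ there is a natural isomorphism $\EI_2(j(M))\cong\EI_1(M)$, up to the regrading. Applying this to $j(M^{\inv})=(j(M))^{\inv}$, which again has vanishing $d_0$, gives the same picture for the second spectral sequence: $\EII_0(j(M))=\EII_1(j(M))=(j(M))^{\inv}$ and $\EII_2(j(M))\cong\EII_1(M)$. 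Now a fibration of $(\cpx{2})_{0,0}$ is exactly a bidegreewise surjection $f$, and then $\EI_0(j(f))$, $\EI_1(j(f))$, $\EII_0(j(f))$ and $\EII_1(j(f))$ all agree with $f$ up to regrading, hence are bidegreewise surjective, so $j(f)$ is a fibration of $(\cpx{4})_{1,1}$. A weak equivalence of $(\cpx{2})_{0,0}$ is an $f$ for which $\EI_1(f)$ and $\EII_1(f)$ are isomorphisms; then $\EI_2(j(f))\cong\EI_1(f)$ and $\EII_2(j(f))\cong\EII_1(f)$ are isomorphisms, i.e.\ $j(f)\in\Ee_{1,1}$. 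Hence $(q,j)$ is a Quillen adjunction.

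I expect the step needing most care to be the spectral-sequence bookkeeping of the last paragraph: one must confirm that re-reading $(d_0^M,d_1^M)$ as the components $(d_1,d_2)$ of a $4$-multicomplex while forcing $d_0=d_3=0$ really shifts each of the two associated spectral sequences by exactly one page on pages $0$, $1$ and $2$, and that the two regradings together with the two involutions on $\Z^2$ interlock so that the bidegrees on $\EI_2(j(M))$ and $\EI_1(M)$ (and likewise for $\EII$) actually match. Everything else — the algebra relations, the adjunction isomorphism, and the descriptions of fibrations and weak equivalences in the two model structures — is routine.
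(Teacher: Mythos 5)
Your proposal is correct and follows essentially the same route as the paper: the adjunction is the extension/restriction of scalars along $\Aa_4\twoheadrightarrow\Aa_4/(\delta_0,\delta_3)\cong\Aa_2$ (which the paper treats as immediate, describing only the unit and counit), and the Quillen property is verified exactly as in the paper by observing that $d_0$ and $d_3$ vanish on $j(M)$, so the page-$0$ differentials of both spectral sequences are zero and $\EI_{\le 1}(j(f))$, $\EII_{\le 1}(j(f))$ reduce to $f$ while $\EI_2(j(f))$, $\EII_2(j(f))$ reduce to $\EI_1(f)$, $\EII_1(f)$. Your explicit bidegree and involution computations are accurate and simply fill in steps the paper declares immediate.
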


\begin{proof}
    That the pair $(q,j)$ is an adjunction is immediate. The counit of the adjunction is the identity. The unit of the adjunction $L\to jqL$, for $L\in \cpx{4}$ is given by the projection $L\to L/\Aa_4\delta_0 L+\Aa_4\delta_3L$. Let us show that the right adjoint $j$ preserves fibrations and weak equivalences. Let $f:M\to N$ be a morphism in $\cpx{2}$ such that $\EI_0(f)=\EII_0(f)$ is surjective. Since the differentials on $\EI_0(j(f))$ and $\EII_0(j(f))$ are $0$ we get that $\EI_r(j(f))$ and $\EII_s(j(f))$ are surjective for $0\leq r,s\leq 1$. For the same reason we have that if $f\in\Ee_{0,0}$, then $j(f)\in\Ee_{1,1}$.
    That the functors $q$ and $j$ preserve the involution is immediate.
\end{proof}

\begin{rem}
    It is not clear to us whether $(q,j)$ is a Quillen equivalence. Since $j$ creates weak equivalences, $(q, j)$ is a Quillen equivalence if and only if
    the unit $\eta_L\colon L\to jqL$ is in $\Ee_{1,1}$ for every cofibrant object $L$ of $(\cpx{4})_{1,1}$.    

    The following provides an example of an object \(K\) such that $\eta_K$ is not in $\Ee_{1,1}$. Let
$K=\kk x\oplus \kk d_0x$ with $x$ in bidegree $(0,0)$: the first page of the spectral sequence associated to $K$ is $0$ whereas the spectral sequence associated to $jq(K)=\kk x$ never vanishes. However it can be checked that this \(K\) is not cofibrant, so this does not settle the question. 

We have checked that if $L$ is of the form $\Zzw_s^4(p,q)$ then $\eta_L$ is in $\Ee_{1,1}$. Despite the fact that the model category structure  $(\cpx{4})_{1,1}$ is cofibrantly generated, we have not been able to proceed to general cofibrant objects since the functor $jq$ does not preserve cofibrations.

We also note that we cannot use the standard argument saying that if $A\to B$ is a quasi-isomorphism of differential graded $\kk$-algebras then the categories of differential graded $A$-modules and $B$-modules are Quillen equivalent, since in that case, the weak equivalences are created by those in differential graded $\kk$-modules. But $\Ee_{1,1}$ is not a class of weak equivalences obtained in that way. 
    \end{rem}

\subsection{Geometric viewpoint}
The interest in comparing bicomplexes and 4-multicomplexes is the following: as mentioned in the introduction, 
the complexified de Rham algebra of any almost complex manifold $M$ splits as a direct sum 
\[\Aa^*_{\mathrm{dR}}(M)\otimes\mathbb{C}=\bigoplus_{p+q=*}\Aa^{p,q}.\]
This is
induced by the almost complex structure $J:TM\to TM$. In general, the differential decomposes into four components $d=\overline{\mu}+\overline{\partial}+\partial+\mu$ of different bidegrees, making the complexified de Rham algebra into a 4-multicomplex. 
A long-standing question in complex geometry is whether any compact almost complex manifold of dimension $\geq 6$ admits an integrable structure (and hence has a holomorphic atlas). Note that in dimension 2 this is always the case, while in dimension 4 there are known examples of almost complex manifolds not admitting any integrable structure. However, in higher dimensions the question remains open. In particular, it is not known whether $S^6$ admits the structure of a complex manifold. Integrable structures are characterized by the vanishing of $\mu$ and $\overline{\mu}$ and so
in this case, the complex de Rham algebra is just a bicomplex, with $d=\overline{\partial}+\partial$. 
We refer to Chapter 1 of \cite{Angella} or Chapter 4 of \cite{FOT} for proof and discussion of these facts.

A more refined question, posed by Sullivan, is whether there exist homotopical obstructions for a compact almost complex manifold to admit an integrable structure.
From the viewpoint of rational homotopy, this question should be addressed by comparing the homotopy theories of commutative algebras in 4-multicomplexes and commutative algebras in 2-multicomplexes, i.e., bidifferential bigraded commutative algebras. 
We note here that monoidal properties of the model structures of this paper have not been established.
One potential direction is to draw upon the work of Brotherston~\cite{Bro}, which establishes monoidal structures within the context of related model structures on filtered complexes.
With sufficiently well-behaved monoidal model structures in place, results
of White~\cite{White2017} would allow one to obtain model structures on commutative algebras.

\bibliographystyle{alpha}
\bibliography{biblio}

@book {Angella,
    AUTHOR = {Angella, D.},
     TITLE = {Cohomological aspects in complex non-{K}\"ahler geometry},
    SERIES = {Lecture Notes in Mathematics},
    VOLUME = {2095},
 PUBLISHER = {Springer, Cham},
      YEAR = {2014},
     PAGES = {xxvi+262},
      ISBN = {978-3-319-02440-0; 978-3-319-02441-7},
}

@book {FOT,
    AUTHOR = {F\'elix, Y. and Oprea, J. and Tanr\'e, D.},
     TITLE = {Algebraic models in geometry},
    SERIES = {Oxford Graduate Texts in Mathematics},
    VOLUME = {17},
 PUBLISHER = {Oxford University Press, Oxford},
      YEAR = {2008},
     PAGES = {xxii+460},
      ISBN = {978-0-19-920651-3},
}

@book {Borceux2,
    AUTHOR = {Borceux, Francis},
     TITLE = {Handbook of categorical algebra. 1},
    SERIES = {Encyclopedia of Mathematics and its Applications},
    VOLUME = {50},
       PUBLISHER = {Cambridge University Press, Cambridge},
      YEAR = {1994},
     PAGES = {xvi+345},
      ISBN = {0-521-44178-1},
  }

@article {Barwick2010,
    AUTHOR = {Barwick, Clark},
     TITLE = {On left and right model categories and left and right
              {B}ousfield localizations},
   JOURNAL = {Homology Homotopy Appl.},
  FJOURNAL = {Homology, Homotopy and Applications},
    VOLUME = {12},
      YEAR = {2010},
    NUMBER = {2},
     PAGES = {245--320},
      ISSN = {1532-0073,1532-0081},
   MRCLASS = {18G55},
  MRNUMBER = {2771591},
MRREVIEWER = {Philippe\ Gaucher},
       DOI = {10.4310/hha.2010.v12.n2.a9},
       URL = {https://doi.org/10.4310/hha.2010.v12.n2.a9},
}

@article {Algebra,
    AUTHOR = {Auyeung, S. and Guu, J.-C. and Hu, J.},
     TITLE = {On the algebra generated by
              {$\overline\mu,\overline\partial,\partial,\mu$}},
   JOURNAL = {Complex Manifolds},
  FJOURNAL = {Complex Manifolds},
    VOLUME = {10},
      YEAR = {2023},
    NUMBER = {1},
     PAGES = {Paper No. 20220149, 10pp.},
}

@article {Dirac,
    AUTHOR = {Gualtieri, M. and Matviichuk, M. and Scott, G.},
     TITLE = {Deformation of {D}irac structures via {$L_\infty$}-algebras},
   JOURNAL = {Int. Math. Res. Not. IMRN},
  FJOURNAL = {International Mathematics Research Notices. IMRN},
      YEAR = {2020},
    NUMBER = {14},
     PAGES = {4295--4323},
}

@article {celw18,
    AUTHOR = {Cirici, J. and Egas Santander, D. and Livernet, M.
              and Whitehouse, S.},
     TITLE = {Derived {$A$}-infinity algebras and their homotopies},
   JOURNAL = {Topology Appl.},
  FJOURNAL = {Topology and its Applications},
    VOLUME = {235},
      YEAR = {2018},
     PAGES = {214--268},
}

@book {Hovey,
    AUTHOR = {Hovey, M.},
     TITLE = {Model categories},
    SERIES = {Mathematical Surveys and Monographs},
    VOLUME = {63},
 PUBLISHER = {American Mathematical Society, Providence, RI},
      YEAR = {1999},
     PAGES = {xii+209},
}

@article {lwz,
    AUTHOR = {Livernet, M. and Whitehouse, S. and Ziegenhagen,
              S.},
     TITLE = {On the spectral sequence associated to a multicomplex},
   JOURNAL = {J. Pure Appl. Algebra},
  FJOURNAL = {Journal of Pure and Applied Algebra},
    VOLUME = {224},
      YEAR = {2020},
    NUMBER = {2},
     PAGES = {528--535},
}

@article {CWDol,
    AUTHOR = {Cirici, J. and Wilson, S. O.},
     TITLE = {Dolbeault cohomology for almost complex manifolds},
   JOURNAL = {Adv. Math.},
  FJOURNAL = {Advances in Mathematics},
    VOLUME = {391},
      YEAR = {2021},
     PAGES = {Paper No. 107970, 52pp.},
}

@article {CWfour,
    AUTHOR = {Cirici, J. and Wilson, S. O.},
     TITLE = {Hodge--de {R}ham numbers of almost complex 4-manifolds},
   JOURNAL = {Expo. Math.},
  FJOURNAL = {Expositiones Mathematicae},
    VOLUME = {40},
      YEAR = {2022},
    NUMBER = {4},
     PAGES = {1244--1260},
}

@article {SiTo,
    AUTHOR = {Sillari, L. and Tomassini, A.},
     TITLE = {Dolbeault and {$J$}-invariant cohomologies on almost complex
              manifolds},
   JOURNAL = {Complex Anal. Oper. Theory},
  FJOURNAL = {Complex Analysis and Operator Theory},
    VOLUME = {15},
      YEAR = {2021},
    NUMBER = {7},
     PAGES = {Paper No. 112, 28pp.},
}

@article {celw,
    AUTHOR = {Cirici, J. and Egas Santander, D. and Livernet, M.
              and Whitehouse, S.},
     TITLE = {Model category structures and spectral sequences},
   JOURNAL = {Proc. Roy. Soc. Edinburgh Sect. A},
  FJOURNAL = {Proceedings of the Royal Society of Edinburgh. Section A.
              Mathematics},
    VOLUME = {150},
      YEAR = {2020},
    NUMBER = {6},
     PAGES = {2815--2848},
}

@article {fglw22,
    AUTHOR = {Fu, X. and Guan, A. and Livernet, M. and Whitehouse,
              S.},
     TITLE = {Model category structures on multicomplexes},
   JOURNAL = {Topology Appl.},
  FJOURNAL = {Topology and its Applications},
    VOLUME = {316},
      YEAR = {2022},
     PAGES = {Paper No. 108104, 26pp.},
}

@article{Bro,
  title={Monoidal model structures on filtered chain complexes relating to spectral sequences},
  author={Brotherston, J.A.},
  journal={preprint arXiv:2402.09207},
  year={2024}
}

@article{SPZ,
  title={Nontrivial {M}assey products on compact {K}ähler manifolds},
  author={Placini, G. and Stelzig, J. and Zoller, L.},
  journal={preprint arXiv:2404.09867},
  year={2024}
}

@article {CPS,
    AUTHOR = {Coelho, R. and Placini, G. and Stelzig, J.},
     TITLE = {Maximally non-integrable almost complex structures: an
              {$h$}-principle and cohomological properties},
   JOURNAL = {Selecta Math. (N.S.)},
  FJOURNAL = {Selecta Mathematica. New Series},
    VOLUME = {28},
      YEAR = {2022},
    NUMBER = {5},
     PAGES = {Paper No. 83, 25pp.},
}

@article {MuRo,
    AUTHOR = {Muro, F. and Roitzheim, C.},
     TITLE = {Homotopy theory of bicomplexes},
   JOURNAL = {J. Pure Appl. Algebra},
  FJOURNAL = {Journal of Pure and Applied Algebra},
    VOLUME = {223},
      YEAR = {2019},
    NUMBER = {5},
     PAGES = {1913--1939},
}

@article{MiSte,
  title={Bigraded notions of formality and {A}eppli-{B}ott-{C}hern-{M}assey products},
  author={Milivojevic, A. and Stelzig, J.},
  journal={Comm. Anal. Geom.},
  VOLUME = {32},
      YEAR = {2024},
    NUMBER = {10},
     PAGES = {2901--2933},
}

@article {Stelzig,
    AUTHOR = {Stelzig, J.},
     TITLE = {Pluripotential homotopy theory},
   JOURNAL = {Adv. Math.},
  FJOURNAL = {Advances in Mathematics},
    VOLUME = {460},
      YEAR = {2025},
      PAGES = {Paper No. 110038, 61pp.},
      ISSN = {0001-8708,1090-2082},
}

@article {AnTo,
    AUTHOR = {Angella, D. and Tomassini, A.},
     TITLE = {On {B}ott-{C}hern cohomology and formality},
   JOURNAL = {J. Geom. Phys.},
  FJOURNAL = {Journal of Geometry and Physics},
    VOLUME = {93},
      YEAR = {2015},
     PAGES = {52--61},

}

@article {White2017,
    AUTHOR = {White, D.},
     TITLE = {Model structures on commutative monoids in general model
              categories},
   JOURNAL = {J. Pure Appl. Algebra},
  FJOURNAL = {Journal of Pure and Applied Algebra},
    VOLUME = {221},
      YEAR = {2017},
    NUMBER = {12},
     PAGES = {3124--3168},
      ISSN = {0022-4049,1873-1376},
   MRCLASS = {18D10 (55U35)},
  MRNUMBER = {3666740},
MRREVIEWER = {Javier\ J.\ Guti\'errez},
       DOI = {10.1016/j.jpaa.2017.03.001},
       URL = {https://doi.org/10.1016/j.jpaa.2017.03.001},
}

@incollection {Grobner95,
    AUTHOR = {Ufnarovskij, V. A.},
     TITLE = {Combinatorial and asymptotic methods in algebra},
 BOOKTITLE = {Algebra, {VI}},
    SERIES = {Encyclopaedia Math. Sci.},
    VOLUME = {57},
     PAGES = {1--196},
 PUBLISHER = {Springer, Berlin},
      YEAR = {1995},
      ISBN = {3-540-54699-5},
   MRCLASS = {16S15 (13D40 16P90 16W50)},
  MRNUMBER = {1360005},
       DOI = {10.1007/978-3-662-06292-0\_1},
       URL = {https://doi.org/10.1007/978-3-662-06292-0_1},
}
\end{document}